\theoremstyle{definition}
\newtheorem{theorem}{Theorem}[section]
\newtheorem{lemma}[theorem]{Lemma}
\newtheorem{corollary}[theorem]{Corollary}
\newtheorem{proposition}[theorem]{Proposition}
\newtheorem{remark}[theorem]{Remark}
\begin{document}
\title{\bf Improvements of the weighted Hermite-Hadamard inequality and applications to mean inequality}
\author{Shigeru Furuichi$^1$\footnote{E-mail:furuichi@chs.nihon-u.ac.jp}, Nicu\c{s}or Minculete$^2$\footnote{E-mail:minculeten@yahoo.com} and Hamid Reza Moradi$^3$\footnote{E-mail:hrmoradi.68@gmail.com}\\
$^1${\scriptsize	Department of Information Science, College of Humanities and Sciences, Nihon University, Setagaya-ku, Tokyo,
Japan}\\
$^2${\scriptsize Faculty of Mathematics and Computer Science, Transilvania University, Iuliu Maniu street 50, 500091 Bra\c{s}ov, Romania}\\
$^3${\scriptsize Department of Mathematics, Mashhad Branch, Islamic Azad University, Mashhad, Iran}}
\date{}
\maketitle
{\bf Abstract.} 
This paper aims to characterize the function appearing in the weighted Hermite-Hadamard inequality. We provide improved inequalities for the weighted means as applications of the obtained results. Modifications of the weighted Hermite-Hadamard inequality are also presented. Our results contain some exciting inequalities and extensions of the known results. 
\vspace{3mm}

{\bf Keywords : } Weighted logarithmic mean, weighted identric mean, convex function, Hermite-Hadamard inequality.
\vspace{3mm}

{\bf 2010 Mathematics Subject Classification : } Primary 26D15; Secondary 26B25, 26E60.  
\vspace{3mm}

%%%%%%%%%%%%%%%%%%%%%%%%%%%%%%%%%%%%%%%%%%%%%%%%%%%%%%%%%%%%%%%%%%%%%%%%%%%%
%%%%%%%%%%%%%%%%%%%%%%%%%%%%%%%%%%%%%%%%%%%%%%%%%%%%%%%%%%%%%%%%%%%%%%%%%%%%
%%%%%%%%%%%%%%%%%%%%%%%%%%%%%%%%%%%%%%%%%%%%%%%%%%%%%%%%%%%%%%%%%%%%%%%%%%%%
%%%%%%%%%%%%%%%%%%%%%%%%%%%%%%%%%%%%%%%%%%%  Section1  %%%%%%%%%%%%%%%%%%%%%%%
%%%%%%%%%%%%%%%%%%%%%%%%%%%%%%%%%%%%%%%%%%%%%%%%%%%%%%%%%%%%%%%%%%%%%%%%%%%%
%%%%%%%%%%%%%%%%%%%%%%%%%%%%%%%%%%%%%%%%%%%%%%%%%%%%%%%%%%%%%%%%%%%%%%%%%%%%
%%%%%%%%%%%%%%%%%%%%%%%%%%%%%%%%%%%%%%%%%%%%%%%%%%%%%%%%%%%%%%%%%%%%%%%%%%%%

\section{Introduction and Preliminiaries}
For $a,b>0$ and $0\le\nu\le1$, the weighted arithmetic-geometric mean inequality asserts that $a\sharp_\nu b \le a\nabla_\nu b$,  where $a \sharp_\nu b:= a^{1-\nu}b^\nu $ and $a\nabla_\nu b:=(1-\nu)a+\nu b$ are named the weighted geometric mean and the weighted arithmetic mean, respectively. We use the symbols $\nabla$ and $\sharp$ instead of ${{\nabla }_{{1}/{2}\;}}$ and ${{\sharp }_{{1}/{2}\;}}$.
During the past decades, the study of inequalities involving mathematical means has attracted many mathematicians; see, for example, \cite{DraAg, F_Y_M, F_M, F_M_2, FM2020, fms2021}.

Recently, in  \cite[Theorem 2.2]{PSMA2016}, the weighted logarithmic mean was introduced in the following structure:
\begin{equation}\label{8}
L_\nu(a,b) := \frac{1}{\log a-\log b}\left\{\frac{1-\nu}{\nu}(a-a^{1-\nu}b^\nu)+\frac{\nu}{1-\nu}(a^{1-\nu}b^\nu-b)\right\}
\end{equation}
for $a,b >0$, $a\neq b$ with $\nu \in (0,1)$ and $L_\nu(a,a)=a$. For $\nu={1}/{2}\;$, \eqref{8} reduces to the logarithmic mean $L_{1/2}(a,b)=L(a,b):= \dfrac{a-b}{\log a-\log b}$. Besides, it has been shown that
\begin{equation}\label{sec1_eq01}
a\sharp_\nu b \leq L_\nu(a,b) \leq a\nabla_\nu b.
\end{equation}
Inequality \eqref{sec1_eq01} provides a modification of the famous Young's inequality 
\begin{equation*}
ab\leq \frac{1}{p\log a-q\log b}\left(\frac{q}{p}(a^p-ab)+\frac{p}{q}(ab-b^q)\right)\leq\frac{a^p}{p}+\frac{b^q}{q}
\end{equation*}
for $a,b >0$, $a^p\neq b^q$ with $p,q>1$ and $1/p+1/q=1$.

Notice that inequality \eqref{sec1_eq01} is an immediate consequence of the following generalization of the Hermite-Hadamard inequality (see \cite[Theorem 2.1]{PSMA2016})
\begin{equation}\label{6}
\begin{aligned}
  & f\left( a{{\nabla }_{\nu}}b \right) \\ 
 & \le \left( 1-\nu \right)\int\limits_{0}^{1}{f\left( \nu \lambda (b-a)+a \right)d\lambda }+\nu\int\limits_{0}^{1}{f\left( (1-\nu)\lambda (b-a)+\nu b+(1-\nu)a \right)d\lambda } \\ 
 & \le f\left( a \right){{\nabla }_{\nu}}f\left( b \right) \\ 
\end{aligned}
\end{equation}
for a convex Riemann integrable function $f:[a,b]\to\mathbb{R}$ and $a,b > 0$ with $\nu\in [0,1]$. Indeed, by letting $\nu=1/2$ in \eqref{6}, we recover the Hermite-Hadamard inequality:
\begin{equation}\label{7}
f\left( a\nabla b \right)\le \int\limits_{0}^{1}{f\left( a{{\nabla }_{\lambda}}b \right)d\lambda}\le f\left( a \right)\nabla f\left( b \right).
\end{equation}
For additional refinements and applications related to Hermite-Hadamard inequality, see \cite{FaLaBa, moradi1, moradi2}. 

Since $a{{\nabla }_{0}}b=a$, $a{{\nabla }_{1}}b=b$, $a{{\nabla }_{1-t}}b=b{{\nabla }_{t}}a$, and $\left( a{{\nabla }_{\alpha }}b \right){{\nabla }_{\gamma }}\left( a{{\nabla }_{\beta }}b \right)=a{{\nabla }_{\left( 1-\gamma  \right)\alpha+\gamma \beta }}b$ with $\alpha ,\beta ,\gamma \in \left[ 0,1 \right]$, inequality \eqref{6} can be written as
\begin{equation}\label{Pal_gen_HHI}
f\left( a{{\nabla }_{\nu}}b \right)\le {{\mathfrak C }_{f,\nu}}\left( a,b \right)\le f\left( a \right){{\nabla }_{\nu}}f\left( b \right)
\end{equation}
where
\begin{equation}\label{sec1_eq02.1.0}
{{\mathfrak C }_{f,\nu}}\left( a,b \right)=\left( \int\limits_{0}^{1}{f\left( a{{\nabla }_{\nu \lambda }}b \right)d\lambda } \right){{\nabla }_{\nu}}\left( \int\limits_{0}^{1}{f\left( b{{\nabla }_{\left( 1-\nu \right)\lambda }}a \right)d\lambda } \right),
\end{equation}
due to
$$
\int\limits_{0}^{1}{f\left( b{{\nabla }_{\left( 1-\nu \right)(1-\lambda )}}a \right)d\lambda } =
 \int\limits_{0}^{1}{f\left( b{{\nabla }_{\left( 1-\nu \right) \mu}}a \right)d\mu }. 
$$

In \cite{PSMA2016},  the representing function of the weighted logarithmic mean, i.e.,
\begin{equation}\label{rep_func_log_mean}
f_\nu(t):=\dfrac{1}{\log t}\left\{\dfrac{1-\nu}{\nu}(t^\nu-1)+\dfrac{\nu}{1-\nu}(t-t^\nu)\right\}=L_\nu(1,t),
\quad (1\neq t>0)
\end{equation}
was studied and characterized by the following inequalities:
 $$t^\nu\leq f_\nu(t)\leq \frac{1}{2}\left(t^\nu+(1-\nu)+\nu t\right)\leq (1-\nu)+\nu t.$$

The following results have been established in \cite{F_M}:
\begin{theorem}\label{sec1_theorem01}
Let $f:[a,b]\to \mathbb{R}$ be a convex function. Then for any $\nu\in[0,1]$, 
\begin{equation*}
f\left(a\nabla_\nu b\right) \leq \mathfrak R^{(1)}_{f,\nu} (a,b) \leq \mathfrak C _{f,\nu}(a,b) \leq \mathfrak R^{(2)}_{f,\nu} (a,b) \leq f(a)\nabla_\nu f(b),
\end{equation*}
where
\begin{equation*}
\mathfrak R^{(1)}_{f,\nu} (a,b) := f(a\nabla_{\frac{\nu}{2}}b)\nabla_\nu f(a\nabla_{\frac{1+\nu}{2}}b),
\end{equation*}
and
\begin{equation*}
\mathfrak R^{(2)}_{f,\nu} (a,b) := \left(f(a)\nabla_\nu f(b)\right)\nabla\left( f(a \nabla_\nu b) \right).
\end{equation*}
\end{theorem}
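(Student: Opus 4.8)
The plan is to split the five-term chain into its four constituent inequalities, treating the two inequalities flanking $\mathfrak C_{f,\nu}(a,b)$ by the basic Hermite--Hadamard inequality \eqref{7} applied to two subsegments, and the two outer inequalities by plain convexity together with the mean-composition identity $\left(a\nabla_\alpha b\right)\nabla_\gamma\left(a\nabla_\beta b\right)=a\nabla_{(1-\gamma)\alpha+\gamma\beta}b$. The endpoint cases $\nu\in\{0,1\}$ collapse every term to $f(a)$ (resp.\ $f(b)$) and can be dismissed at once, so I would assume $\nu\in(0,1)$.

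For the inner inequalities, the first step is to recognize the two integrals inside $\mathfrak C_{f,\nu}(a,b)$ as Hermite--Hadamard averages over subsegments. Writing $m:=a\nabla_\nu b$, the composition identity gives $a\nabla_\lambda m=a\nabla_{\nu\lambda}b$, so the first integral is $\int_0^1 f(a\nabla_\lambda m)\,d\lambda$, the average of $f$ over $[a,m]$, whose midpoint is $a\nabla m=a\nabla_{\nu/2}b$. Similarly, writing $m=b\nabla_{1-\nu}a$ and using $a\nabla_{1-t}b=b\nabla_t a$ converts the second integral into $\int_0^1 f(b\nabla_\lambda m)\,d\lambda$, the average over $[b,m]$, whose midpoint is $b\nabla m=a\nabla_{(1+\nu)/2}b$. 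Applying \eqref{7} on each subsegment bounds each integral below by its midpoint value and above by the half-sum of its endpoint values; taking the $\nabla_\nu$-combination of these two pairs of bounds then yields $\mathfrak R^{(1)}_{f,\nu}(a,b)$ on the low side and $\tfrac12\bigl(f(a)\nabla_\nu f(b)\bigr)+\tfrac12 f(m)=\mathfrak R^{(2)}_{f,\nu}(a,b)$ on the high side, which are exactly the two inner inequalities.

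For the outer inequalities, the left one follows from Jensen once the composition identity (with $\alpha=\nu/2$, $\beta=(1+\nu)/2$, $\gamma=\nu$) is used to check that $(a\nabla_{\nu/2}b)\nabla_\nu(a\nabla_{(1+\nu)/2}b)=a\nabla_\nu b$; convexity of $f$ applied to this single $\nabla_\nu$-combination gives $f(a\nabla_\nu b)\le\mathfrak R^{(1)}_{f,\nu}(a,b)$. The right one is immediate: since $\mathfrak R^{(2)}_{f,\nu}(a,b)=\tfrac12\bigl(f(a)\nabla_\nu f(b)\bigr)+\tfrac12 f(a\nabla_\nu b)$, the bound $\mathfrak R^{(2)}_{f,\nu}(a,b)\le f(a)\nabla_\nu f(b)$ is equivalent to the defining convexity inequality $f(a\nabla_\nu b)\le f(a)\nabla_\nu f(b)$.

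I expect the only delicate part to be the index bookkeeping: correctly locating the two subsegment midpoints as $a\nabla_{\nu/2}b$ and $a\nabla_{(1+\nu)/2}b$, and justifying the reparametrization $b\nabla_{(1-\nu)\lambda}a=b\nabla_\lambda m$ in the second integral, both of which rely on repeated use of $a\nabla_{1-t}b=b\nabla_t a$ and of the composition identity. Once these identifications are secured, each of the four inequalities reduces to a single application of \eqref{7} or of convexity, and the full chain assembles at once.
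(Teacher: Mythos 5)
Your proof is correct; I checked each of the four constituent inequalities and the identities they rest on, and they all hold: $a\nabla_\lambda(a\nabla_\nu b)=a\nabla_{\nu\lambda}b$, $b\nabla_\lambda(b\nabla_{1-\nu}a)=b\nabla_{(1-\nu)\lambda}a$, the midpoints $a\nabla_{\nu/2}b$ and $a\nabla_{(1+\nu)/2}b$, and the algebraic collapse $\bigl(f(a)\nabla f(m)\bigr)\nabla_\nu\bigl(f(b)\nabla f(m)\bigr)=\mathfrak R^{(2)}_{f,\nu}(a,b)$ with $m=a\nabla_\nu b$. Note that this paper does not actually prove Theorem \ref{sec1_theorem01}; it is quoted as an established result from \cite{F_M}, so there is no in-paper proof to compare against. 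Your argument — splitting $[a,b]$ at $a\nabla_\nu b$, applying the classical Hermite--Hadamard inequality \eqref{7} on each subsegment to trap the two integrals in $\mathfrak C_{f,\nu}(a,b)$, and handling the two outer inequalities by convexity plus the composition identity — is the natural and standard route to this result, and is essentially the argument of the cited source.
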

\begin{corollary}\label{sec1_cor01}
Let $a,b>0$ and $\nu\in (0,1)$. Then
\begin{equation*}
a\sharp_\nu b \leq \left(a\sharp_{\frac{\nu}{2}} b\right)\nabla_\nu \left(a \sharp_{\frac{1+\nu}{2}}b\right)\leq L_\nu(a,b)\leq \left(a\nabla_\nu b\right) \nabla\left(a\sharp_\nu b\right) \leq a \nabla_\nu b.
\end{equation*}
\end{corollary}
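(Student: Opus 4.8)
The plan is to deduce the corollary from Theorem \ref{sec1_theorem01} by choosing the convex function $f(x)=e^{x}$ and feeding it the logarithms of the data. Concretely, I assume $a\neq b$ (the case $a=b$ is trivial, since every displayed term equals $a$ and $L_\nu(a,a)=a$ by definition), set $\alpha=\log a$ and $\beta=\log b$, and apply Theorem \ref{sec1_theorem01} to $f=\exp$ on the interval with endpoints $\alpha,\beta$. The exponential is convex on all of $\mathbb{R}$, so the hypothesis is satisfied.

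First I would translate the two outer terms. Because $\exp$ carries weighted arithmetic means of logarithms into weighted geometric means, we get $f(\alpha\nabla_\nu\beta)=e^{(1-\nu)\log a+\nu\log b}=a^{1-\nu}b^{\nu}=a\sharp_\nu b$ and $f(\alpha)\nabla_\nu f(\beta)=a\nabla_\nu b$. The same device handles the two refinement terms: since $\alpha\nabla_{\nu/2}\beta=\log(a\sharp_{\nu/2}b)$ and $\alpha\nabla_{(1+\nu)/2}\beta=\log(a\sharp_{(1+\nu)/2}b)$, we obtain $\mathfrak R^{(1)}_{f,\nu}(\alpha,\beta)=(a\sharp_{\nu/2}b)\nabla_\nu(a\sharp_{(1+\nu)/2}b)$, while $\mathfrak R^{(2)}_{f,\nu}(\alpha,\beta)=(a\nabla_\nu b)\nabla(a\sharp_\nu b)$ follows at once from the definition of $\mathfrak R^{(2)}$ together with the two outer identities already computed.

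The one genuinely computational step, and the main obstacle, is identifying the central quantity $\mathfrak C_{f,\nu}(\alpha,\beta)$ with $L_\nu(a,b)$. Here I would evaluate the two integrals in \eqref{sec1_eq02.1.0} explicitly. For the first, $\int_0^1 e^{(1-\nu\lambda)\log a+\nu\lambda\log b}\,d\lambda=a\int_0^1(b/a)^{\nu\lambda}\,d\lambda=\dfrac{a-a^{1-\nu}b^{\nu}}{\nu(\log a-\log b)}$, and analogously $\int_0^1 e^{(1-(1-\nu)\lambda)\log b+(1-\nu)\lambda\log a}\,d\lambda=\dfrac{a^{1-\nu}b^{\nu}-b}{(1-\nu)(\log a-\log b)}$; each reduces to the elementary integral of an exponential in $\lambda$. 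Forming their $\nabla_\nu$-combination and comparing with \eqref{8} shows that $\mathfrak C_{f,\nu}(\alpha,\beta)$ is exactly $L_\nu(a,b)$.

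With all five terms identified, substituting them into the chain of Theorem \ref{sec1_theorem01} yields the asserted inequalities verbatim; no inequality needs to be reversed, since we work with $\exp$ rather than $-\log$. A minor point to check is that Theorem \ref{sec1_theorem01} is phrased on an interval $[a,b]$ with $a<b$: if $a>b$ one either applies it on $[\log b,\log a]$ or invokes the symmetry $x\nabla_{1-t}y=y\nabla_t x$ recorded in the introduction, and the displayed means are unaffected by this choice.
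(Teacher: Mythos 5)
Your proof is correct. The instantiations all check out: with $f=\exp$, $\alpha=\log a$, $\beta=\log b$ one indeed gets $f(\alpha\nabla_\nu\beta)=a\sharp_\nu b$, $f(\alpha)\nabla_\nu f(\beta)=a\nabla_\nu b$, $\mathfrak R^{(1)}_{f,\nu}(\alpha,\beta)=(a\sharp_{\nu/2}b)\nabla_\nu(a\sharp_{(1+\nu)/2}b)$, $\mathfrak R^{(2)}_{f,\nu}(\alpha,\beta)=(a\nabla_\nu b)\nabla(a\sharp_\nu b)$, and your evaluation of the two integrals gives exactly the two bracketed terms in \eqref{8}, so $\mathfrak C_{\exp,\nu}(\log a,\log b)=L_\nu(a,b)$. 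Note, however, that this paper does not itself prove Corollary \ref{sec1_cor01} (it quotes it from \cite{F_M}); the route the paper takes for the analogous, stronger Theorem \ref{theorem_2.8.} differs slightly from yours: there one sets $a=0$, $b=1$, $f(\lambda)=t^\lambda$ in Theorem \ref{sec1_theorem01}, obtains the chain of inequalities for the one-variable representing function $f_\nu(t)=\mathfrak C_{t^\lambda,\nu}(0,1)=L_\nu(1,t)$ of \eqref{rep_func_log_mean}, and only afterwards homogenizes by putting $t=b/a$ and multiplying by $a$. Your substitution is the affinely reparametrized version of the same idea (since $t^\lambda=e^{\lambda\log t}$), and it buys directness: the means appear immediately, with no homogenization step, and the key identity you compute, $\mathfrak C_{\exp,\nu}(\log a,\log b)=L_\nu(a,b)$, is in fact recorded verbatim later in the paper (proof of Corollary \ref{corollary_identric}). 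What the paper's parametrization buys instead is that everything stays a function of the single variable $t$, which is precisely what the proof of Theorem \ref{theorem_2.8.} exploits (replacing $t$ by $t^{1/2}$ and multiplying by $t^{\nu/2}$) to insert additional terms into the chain; those manipulations are less natural in the two-variable form. Your treatment of the degenerate case $a=b$ and of the interval orientation via $x\nabla_{1-t}y=y\nabla_t x$ is also fine.
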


 In this paper, we refine inequalities \eqref{sec1_eq01}. Refinement of the Hermite-Hadamard inequality is also provided. The inequalities demonstrated in the next section can be extended to the positive Hilbert space operators by utilizing the standard functional calculus. We leave this idea for the interested reader.

%%%%%%%%%%%%%%%%%%%%%%%%%%%%%%%%%%%%%%%%%%%%%%%%%%%%%%%%%%%%%%%%%%%%%%%%%
%%%%%%%%%%%%%%%%%%%%%%%%%%%%%%%%%%%%%%%%%%%%%%%%%%%%%%%%%%%%%%%%%%%%%%%%%
%%%%%%%%%%%%%%%%%%%%%%%%%%%%%%%%%%%%%%%%%%%%%%%%%%%%%%%%%%%%%%%%%%%%%%%%%
%%%%%%%%%%%%%%%%%%%%%%%%%Section 2%%%%%%%%%%%%%%%%%%%%%%%%%%%%%%%%%%%%%%%%
%%%%%%%%%%%%%%%%%%%%%%%%%%%%%%%%%%%%%%%%%%%%%%%%%%%%%%%%%%%%%%%%%%%%%%%%%
%%%%%%%%%%%%%%%%%%%%%%%%%%%%%%%%%%%%%%%%%%%%%%%%%%%%%%%%%%%%%%%%%%%%%%%%%
%%%%%%%%%%%%%%%%%%%%%%%%%%%%%%%%%%%%%%%%%%%%%%%%%%%%%%%%%%%%%%%%%%%%%%%%%
\section{Main Results}
We begin with the following lemma, which includes two identities for $\mathfrak C _{f,\nu}$.
\begin{lemma}
Let $f:[0,1)\cup(1,\infty)\to \mathbb{R}$ be a convex function. Then, for any $\nu\in(0,1)$, 
\begin{equation}\label{sec2_eq01}
 \mathfrak C _{f,\nu}(t,1) =
    \frac{1}{1-t}\left(\frac{1-\nu}{\nu}\int\limits_t^{(1-\nu)t+\nu}f(\lambda )d\lambda +\frac{\nu}{1-\nu}\int\limits_{(1-\nu)t+\nu}^1f(\lambda )d\lambda \right),
\end{equation}
and
\begin{equation}\label{sec2_eq01_002}
 \mathfrak C _{f,\nu}(1,t) =
    \frac{1}{t-1}\left(\frac{1-\nu}{\nu}\int\limits_1^{(1-\nu)+\nu t}f(\lambda )d\lambda +\frac{\nu}{1-\nu}\int\limits_{(1-\nu)+\nu t}^tf(\lambda )d\lambda \right).
\end{equation}
\end{lemma}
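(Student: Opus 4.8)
The plan is to establish both identities by direct computation from the defining formula \eqref{sec1_eq02.1.0}, with convexity entering only to guarantee that $f$ is Riemann integrable on the relevant subintervals. Unwinding the outer $\nabla_\nu$ in \eqref{sec1_eq02.1.0} at the point $(a,b)=(t,1)$ gives
$$\mathfrak C_{f,\nu}(t,1)=(1-\nu)\int_0^1 f\bigl(t\nabla_{\nu\lambda}1\bigr)\,d\lambda+\nu\int_0^1 f\bigl(1\nabla_{(1-\nu)\lambda}t\bigr)\,d\lambda.$$
I would then treat each of the two integrals separately by an affine change of variables.

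For the first integral, note that $t\nabla_{\nu\lambda}1=t+\nu\lambda(1-t)$ is an affine function of $\lambda$ sweeping the interval from $t$ (at $\lambda=0$) to $(1-\nu)t+\nu$ (at $\lambda=1$), with constant derivative $\nu(1-t)$. Substituting $u=t+\nu\lambda(1-t)$ converts $\int_0^1 f(t\nabla_{\nu\lambda}1)\,d\lambda$ into $\frac{1}{\nu(1-t)}\int_t^{(1-\nu)t+\nu}f(u)\,du$. Similarly, $1\nabla_{(1-\nu)\lambda}t=1-(1-\nu)\lambda(1-t)$ runs from $1$ down to $(1-\nu)t+\nu$ with derivative $-(1-\nu)(1-t)$, so the analogous substitution turns the second integral into $\frac{1}{(1-\nu)(1-t)}\int_{(1-\nu)t+\nu}^1 f(v)\,dv$. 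Multiplying these by the outer weights $1-\nu$ and $\nu$ respectively, the Jacobian factors combine with the weights to produce precisely the coefficients $\frac{1-\nu}{\nu}$ and $\frac{\nu}{1-\nu}$ together with the common prefactor $\frac{1}{1-t}$, yielding \eqref{sec2_eq01}.

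The second identity \eqref{sec2_eq01_002} follows by the same scheme applied at $(a,b)=(1,t)$: here $1\nabla_{\nu\lambda}t=1+\nu\lambda(t-1)$ sweeps from $1$ to $(1-\nu)+\nu t$ while $t\nabla_{(1-\nu)\lambda}1=t-(1-\nu)\lambda(t-1)$ sweeps from $t$ to $(1-\nu)+\nu t$, so the corresponding substitutions (now with the Jacobian sign governed by $t-1$) reproduce the right-hand side of \eqref{sec2_eq01_002}. The only point requiring care is bookkeeping of the endpoints and the orientation of each integral after substitution---in particular remembering that the factor $\nu(1-t)$ (resp.\ $(1-\nu)(1-t)$) coming from the Jacobian is exactly what downgrades the outer weight $1-\nu$ (resp.\ $\nu$) into $\frac{1-\nu}{\nu}$ (resp.\ $\frac{\nu}{1-\nu}$). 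No genuine analytic obstacle arises; the lemma is an exercise in change of variables.
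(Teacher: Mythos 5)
Your proposal is correct and follows exactly the paper's route: the paper's proof simply says to put $a=t$, $b=1$ (respectively $a=1$, $b=t$) into \eqref{sec1_eq02.1.0} and compute, and your affine substitutions $u=t+\nu\lambda(1-t)$ and $v=1-(1-\nu)\lambda(1-t)$ are precisely those omitted calculations, with the Jacobians correctly converting the outer weights $1-\nu$, $\nu$ into the coefficients $\frac{1-\nu}{\nu}$, $\frac{\nu}{1-\nu}$ and the common factor $\frac{1}{1-t}$ (resp.\ $\frac{1}{t-1}$).
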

\begin{proof}
Putting $a=t$ and $b=1$ in  \eqref{sec1_eq02.1.0}, we deduce the equality \eqref{sec2_eq01}, with calculations. Relation \eqref{sec2_eq01_002} can be obtained likewise.
%$$
%    \mathfrak C _{f,\nu}(t,1)= 
%    \int_0^1f(t\nabla_{\nu \lambda }1)d\lambda \nabla_\nu \int_0^1f((1-\nu )(1-t)\lambda +t\nabla_\nu 1)d\lambda 
%$$
%$$ 
%   = (1-\nu )\int_0^1f((1-\nu \lambda )t+\nu \lambda )d\lambda +\nu \int_0^1f((1-\nu )(1-t)\lambda +(1-\nu )t+\nu )d\lambda ,
%$$ 
%\begin{equation*}
%   = \frac{1-\nu }{\nu (1-t)}\int_t^{(1-\nu )t+\nu }f(\lambda )d\lambda +\frac{\nu }{(1-\nu )(1-t)}\int_{(1-\nu )t+\nu }^1f(\lambda )d\lambda .
%\end{equation*}
%Therefore, we conclude the relation of the statement.
\end{proof}
\begin{remark}\label{1}
\hfill
\begin{itemize}
\item[(i)] If we take $f(\lambda )=\lambda $ in \eqref{sec2_eq01_002}, then we have $\mathfrak C _{\lambda,\nu }(1,t)=(1-\nu )+\nu t$ which is the representing function of the weighted arithmetic mean.
\item[(ii)] If we take $\nu ={1}/{2}\;$ in  \eqref{sec2_eq01} and \eqref{sec2_eq01_002}, then we reach
$$
    \mathfrak C _{f,\frac{1}{2}}(t,1) = \frac{1}{1-t} \int\limits_t^1f(\lambda )d\lambda =\frac{1}{t-1}\int\limits_1^tf(\lambda )=\mathfrak C _{f,\frac{1}{2}}(1,t),\quad (1\neq t >0).
$$
\item[(iii)] For $t=0$, in equality \eqref{sec2_eq01}, we obtain
\begin{equation}\label{remark2.2_eq01}
    \mathfrak C _{f,\nu }(0,1) = 
    \frac{1-\nu }{\nu }\int\limits_0^{\nu }f(\lambda )d\lambda +\frac{\nu }{1-\nu }\int\limits_{\nu }^1f(\lambda )d\lambda .
\end{equation}
If we take $f(\lambda )=t^\lambda $ in \eqref{remark2.2_eq01}, then we deduce  
$\mathfrak C _{t^\lambda ,\nu }(0,1)=f_\nu (t)$, where $f_\nu (t)$ is given as in \eqref{rep_func_log_mean}.
\end{itemize}
\end{remark}

On account of Remark \ref{1}, it is interesting to study the function $\mathfrak C _{f,\nu }(t,1)$. The following result presents an upper and a lower bound for $\mathfrak C _{f,\nu }(t,1)$.
\begin{theorem}\label{sec2_lemma01}
Let $f:[0,1)\cup(1,\infty)\to \mathbb{R}_+$ be a convex function. Then for any $\nu \in(0,1)$, 
\begin{equation}\label{sec2_eq13}
    \min\left\{\frac{1-\nu }{\nu },\frac{\nu }{1-\nu }\right\}\mathfrak C _{f,\frac{1}{2}}(t,1)
        \leq 
    \mathfrak C _{f,\nu }(t,1)
        \leq 
    \max\left\{\frac{1-\nu }{\nu },\frac{\nu }{1-\nu }\right\}\mathfrak C _{f,\frac{1}{2}}(t,1).
\end{equation}
\end{theorem}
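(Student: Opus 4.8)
The plan is to work directly from the integral representation \eqref{sec2_eq01} supplied by the Lemma, together with its $\nu=1/2$ specialization recorded in Remark \ref{1}(ii), namely $\mathfrak C_{f,\frac12}(t,1)=\frac{1}{1-t}\int_t^1 f(\lambda)\,d\lambda$. The essential observation is that the splitting point $c:=(1-\nu)t+\nu$ is a convex combination of $t$ and $1$ with weight $\nu\in(0,1)$, so it always lies strictly between $t$ and $1$; consequently the two integrals appearing in \eqref{sec2_eq01} partition the single integral $\int_t^1 f$ that defines $\mathfrak C_{f,\frac12}(t,1)$. Thus, setting $I_1:=\int_t^c f(\lambda)\,d\lambda$, $I_2:=\int_c^1 f(\lambda)\,d\lambda$, $w_1:=\frac{1-\nu}{\nu}$, and $w_2:=\frac{\nu}{1-\nu}$, I have $(1-t)\,\mathfrak C_{f,\nu}(t,1)=w_1I_1+w_2I_2$ while $(1-t)\,\mathfrak C_{f,\frac12}(t,1)=I_1+I_2$.

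The heart of the argument is the elementary fact that, for nonnegative reals $A,B$ and positive weights $w_1,w_2$, one has $\min\{w_1,w_2\}(A+B)\le w_1A+w_2B\le\max\{w_1,w_2\}(A+B)$, obtained simply by bounding each summand $w_jA_j$ below by $\min\{w_1,w_2\}A_j$ and above by $\max\{w_1,w_2\}A_j$ and adding. To apply this I first record the sign of $I_1,I_2$: since the hypothesis $f:[0,1)\cup(1,\infty)\to\mathbb{R}_+$ forces $f\ge0$, the integrals $I_1,I_2$ are nonnegative precisely when their limits are increasing, i.e. when $t<c<1$, and nonpositive when $1<c<t$.

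I would then split into the two cases and track the sign of the common factor $1-t$. For $t<1$ we have $1-t>0$ and $I_1,I_2\ge0$, so the elementary inequality applies directly and is preserved upon dividing by $1-t$, yielding \eqref{sec2_eq13}. For $t>1$ we have $1-t<0$ and $I_1,I_2\le0$; replacing $I_j$ by $-I_j\ge0$ reverses the chain once, and then dividing by the negative factor $1-t$ reverses it a second time, so the net orientation is restored and \eqref{sec2_eq13} holds again.

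The main obstacle is purely bookkeeping rather than conceptual: in the case $t>1$ one must keep careful track of the two independent sign reversals (the sign of the integrals $I_1,I_2$ and the sign of $1-t$) so that the $\min$/$\max$ bound emerges with the correct orientation. Everything else reduces to the one-line weighted-average estimate above, so no further analytic input about $f$ beyond its nonnegativity is needed.
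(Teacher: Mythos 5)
Your proposal is correct and follows essentially the same route as the paper: split $\int_t^1 f$ at the point $c=(1-\nu)t+\nu$ and apply the elementary bound $\min\{w_1,w_2\}(A+B)\le w_1A+w_2B\le\max\{w_1,w_2\}(A+B)$ to the two pieces. If anything, your explicit sign bookkeeping for the case $t>1$ is more careful than the paper's single displayed chain, which silently relies on the fact that $\frac{1}{1-t}\int_t^c f$ and $\frac{1}{1-t}\int_c^1 f$ remain nonnegative on both sides of $t=1$.
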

\begin{proof}
Employing Remark \ref{1} (ii), we have
{\small
\[\begin{aligned}
   \min \left\{ \frac{1-\nu }{\nu },\frac{\nu }{1-\nu } \right\}{{\mathfrak C }_{f,\frac{1}{2}}}\left( t,1 \right)&=\min \left\{ \frac{1-\nu }{\nu },\frac{\nu }{1-\nu } \right\}\frac{1}{1-t}\int\limits_{t}^{1}{f\left( \lambda  \right)d\lambda } \\ 
 & =\min \left\{ \frac{1-\nu }{\nu },\frac{\nu }{1-\nu } \right\}\frac{1}{1-t}\left( \int\limits_{t}^{\left( 1-\nu  \right)t+\nu }{f\left( \lambda  \right)d\lambda }+\int\limits_{\left( 1-\nu  \right)t+\nu }^{1}{f\left( \lambda  \right)d\lambda } \right) \\ 
 & \le \frac{1}{1-t}\left( \frac{1-\nu }{\nu }\int\limits_{t}^{\left( 1-\nu  \right)t+\nu }{f\left( \lambda  \right)d\lambda }+\frac{\nu }{1-\nu }\int\limits_{\left( 1-\nu  \right)t+\nu }^{1}{f\left( \lambda  \right)d\lambda } \right) \\ 
 & \le \max \left\{ \frac{1-\nu }{\nu },\frac{\nu }{1-\nu } \right\}\frac{1}{1-t}\left( \int\limits_{t}^{\left( 1-\nu  \right)t+\nu }{f\left( \lambda  \right)d\lambda }+\int\limits_{\left( 1-\nu  \right)t+\nu }^{1}{f\left( \lambda  \right)d\lambda } \right) \\ 
 & =\max \left\{ \frac{1-\nu }{\nu },\frac{\nu }{1-\nu } \right\}\frac{1}{1-t}\int\limits_{t}^{1}{f\left( \lambda  \right)d\lambda } \\ 
 & =\max \left\{ \frac{1-\nu }{\nu },\frac{\nu }{1-\nu } \right\}{{\mathfrak C }_{f,\nu }}\left( t,1 \right).  
\end{aligned}\]
}
Consequently, we prove the inequality of the statement.
\end{proof}

\begin{remark}
Letting $t=0$ in \eqref{sec2_eq13}. Then for any $\nu \in(0,1)$,
\begin{equation}\label{sec2_eq14}
    \min\left\{ \frac{1-\nu }{\nu },\frac{\nu }{1-\nu }\right\}\int\limits_0^1f(\lambda )d\lambda 
        \leq 
    \mathfrak C _{f,\nu }(0,1)\leq \max\left\{ \frac{1-\nu }{\nu },\frac{\nu }{1-\nu }\right\}\int\limits_0^1f(\lambda )d\lambda .
\end{equation}
If we take $f(\lambda )=t^\lambda $ in \eqref{sec2_eq14}, we infer
\begin{equation*}
     \min\left\{ \frac{1-\nu }{\nu },\frac{\nu }{1-\nu }\right\}\underbrace{\frac{t-1}{\log t}}_{L_\frac{1}{2}(t,1)}
        \leq 
     f_\nu (t)
        \leq 
     \max\left\{ \frac{1-\nu }{\nu },\frac{\nu }{1-\nu }\right\}\underbrace{\frac{t-1}{\log t}}_{L_\frac{1}{2}(t,1)}.
\end{equation*}
The above inequalities have been demonstrated in \cite[Theorem 2.2]{F_Y_M}. More precisely, Theorem \ref{sec2_lemma01} provides an extension of  \cite[Theorem 2.2]{F_Y_M}.
\end{remark}

In the following lemma, the difference between the weighted arithmetic mean and the weighted geometric mean has been represented by the representing function of the weighted logarithmic mean $L_{\nu }(t,1)$. 
\begin{lemma}\label{sec2_lemma01.2}
Let $t\in[0,1)\cup(1,\infty)$ and $\nu \in(0,1)$. Then
\begin{equation*}
    L_\frac{1}{2}(t,1)-L_\frac{1}{2}(t^\nu ,1) =\frac{(1-\nu )+\nu t-t^\nu }{\nu  \log t}.
\end{equation*}
\end{lemma}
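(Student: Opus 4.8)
The plan is to evaluate both logarithmic-mean terms directly from the definition $L_{1/2}(a,b) = \frac{a-b}{\log a-\log b}$ recorded in the introduction, specialized to second argument $b=1$. Since $\log 1 = 0$, the general expression collapses to $L_{1/2}(a,1) = \frac{a-1}{\log a}$ whenever $a\neq 1$; this is exactly the representing function $f_{1/2}$ evaluated at $a$, and it is the only structural fact needed.

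First I would set $a=t$ to obtain $L_{1/2}(t,1) = \frac{t-1}{\log t}$. Next I would set $a=t^\nu$ and apply the logarithm power rule $\log t^\nu = \nu\log t$ to get $L_{1/2}(t^\nu,1) = \frac{t^\nu-1}{\nu\log t}$. Both quantities are well defined because the hypothesis $t\in[0,1)\cup(1,\infty)$ forces $t\neq 1$, hence $\log t\neq 0$, while $\nu\in(0,1)$ is nonzero; this guarantees that every denominator appearing below is legitimate.

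Then I would subtract the two fractions over the common denominator $\nu\log t$:
\[
L_{1/2}(t,1)-L_{1/2}(t^\nu,1)=\frac{\nu(t-1)-(t^\nu-1)}{\nu\log t}.
\]
Expanding the numerator yields $\nu t-\nu-t^\nu+1=(1-\nu)+\nu t-t^\nu$, which is precisely the right-hand side of the claimed identity.

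I expect essentially no obstacle: the lemma is a direct algebraic consequence of the definition of the logarithmic mean combined with the power rule for logarithms. The only substantive points worth a line of justification are the nonvanishing of $\log t$ (secured by $t\neq 1$) and of $\nu$ (secured by $\nu\in(0,1)$), which together ensure the manipulations above make sense.
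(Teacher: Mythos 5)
Your proof is correct and follows essentially the same route as the paper: the paper multiplies the difference $L_{\frac12}(t,1)-L_{\frac12}(t^\nu,1)$ by $\nu\log t$ and simplifies to $\nu t-\nu-t^\nu+1$, which is exactly your common-denominator computation written multiplicatively rather than as a subtraction of fractions. Your added remarks on the nonvanishing of $\log t$ and $\nu$ are fine (though, like the paper, you leave the degenerate endpoint $t=0$ to a limiting interpretation).
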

\begin{proof}
It is easy to see that $$\nu \log t\left\{L_\frac{1}{2}(t,1)-L_\frac{1}{2}(t^\nu ,1)\right\}=(\nu \log t) L_\frac{1}{2}(t,1)-(\nu \log t)L_\frac{1}{2}(t^\nu ,1)=\nu t-\nu -t^\nu +1,$$
which proves the equality of the statement.
\end{proof}

In the sequel, we need the following refinements and reverses of Young inequality.
\begin{enumerate}
    \item[(i)] Kittaneh-Manasrah's inequality \cite{m3, m4}: For any $t>0$,
        \begin{equation}\label{sec2_eq17}
            r(\sqrt{t}-1)^2 \leq (1-\nu )+\nu t-t^\nu  \leq R(\sqrt{t}-1)^2,
        \end{equation}
    where $r=\min\{\nu ,1-\nu \}$, $R=\max\{\nu ,1-\nu \}$, and $\nu \in[0,1]$.
  
    \item[(ii)] Cartwright-Field's inequality \cite{m2}: For any $t>0$ and $0\le \nu \le 1$,
        \begin{equation}\label{sec2_eq18}
            \frac{1}{2}\nu (1-\nu )\frac{(t-1)^2}{\max\{t,1\}}
                \leq 
            (1-\nu )+\nu t-t^\nu 
                \leq 
            \frac{1}{2}\nu (1-\nu )\frac{(t-1)^2}{\min\{t,1\}}.
        \end{equation}

    \item[(iii)] Alzer-Fonseca-Kova\v cec's inequality \cite{m1}: For any $t>0$ and $0<\nu ,\lambda <1$,
        \begin{equation}\label{sec2_eq19}
            \frac{1}{2}\nu (1-\nu )\min\{t,1\}\log^2t
                \leq 
            (1-\nu )+\nu t-t^\nu 
                \leq 
            \frac{1}{2}\nu (1-\nu )\max\{t,1\}\log^2t,
        \end{equation}
    and
        \begin{equation}\label{sec2_eq20}
        \begin{aligned}
            \min\left\{\frac{\nu }{\lambda },\frac{1-\nu }{1-\lambda }\right\}\left(\lambda  t+(1-\lambda )-t^\lambda \right)
                &\leq 
            (1-\nu )+\nu t-t^\nu 
                \\&\leq
            \max\left\{\frac{\nu }{\lambda },\frac{1-\nu }{1-\lambda }\right\}\left(\lambda  t+(1-\lambda )-t^\lambda \right).
        \end{aligned}
        \end{equation}
     In particular, if $\lambda =1-\nu $, in \eqref{sec2_eq20}, then 
     \begin{equation}\label{sec2_eq22}
            \begin{aligned}
               \min\left\{ \frac{1-\nu }{\nu },\frac{\nu }{1-\nu }\right\}\left((1-\nu )t+\nu -t^{1-\nu }\right)
                    &\leq 
                (1-\nu )+\nu t-t^\nu 
                    \\&\leq
                \max\left\{\frac{\nu }{1-\nu },\frac{1-\nu }{\nu }\right\}\left((1-\nu ) t+\nu -t^{1-\nu }\right).
            \end{aligned}
            \end{equation}
\end{enumerate}

Employing the above inequalities together with Lemma \ref{sec2_lemma01.2}, we get the following result:
       
 \begin{proposition}\label{sec2_theorem01.2}
Let $\nu \in(0,1)$. If $t>1$, then
\begin{equation*}
    \frac{r}{\nu }\frac{(\sqrt{t}-1)^2}{\log t}
        \leq 
    L_\frac{1}{2}(t,1)-L_\frac{1}{2}(t^\nu ,1)
        \leq 
    \frac{R}{\nu }\frac{(\sqrt{t}-1)^2}{\log t},
\end{equation*}
\begin{equation*}
    \frac{1-\nu }{2}\frac{(t-1)^2}{\max\{t,1\}\log t}
        \leq 
    L_\frac{1}{2}(t,1)-L_\frac{1}{2}(t^\nu ,1)
        \leq 
    \frac{1-\nu }{2}\frac{(t-1)^2}{\min\{t,1\}\log t},
\end{equation*}

\begin{equation*}
    \frac{1-\nu }{2}\min\{t,1\}\log t
        \leq 
    L_\frac{1}{2}(t,1)-L_\frac{1}{2}(t^\nu ,1)
        \leq 
    \frac{1-\nu }{2}\max\{t,1\}\log t,
\end{equation*}
and
\begin{equation*}
    \begin{aligned}
        &\frac{1-\nu }{\nu }\min\left\{\frac{\nu }{1-\nu },\frac{1-\nu }{\nu }\right\}\left(L_\frac{1}{2}(t,1)-L_\frac{1}{2}(t^{1-\nu },1)\right)
            \\&\leq 
        L_\frac{1}{2}(t,1)-L_\frac{1}{2}(t^\nu ,1)
            \\&\leq
        \frac{1-\nu }{\nu }\max\left\{\frac{\nu }{1-\nu },\frac{1-\nu }{\nu }\right\}\left(L_\frac{1}{2}(t,1)-L_\frac{1}{2}(t^{1-\nu },1)\right).
    \end{aligned}
\end{equation*}
The reversed inequalities hold when $0<t<1$.
\end{proposition}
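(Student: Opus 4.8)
The plan is to observe that all four double inequalities in the Proposition arise from a single mechanical procedure: take one of the four refinement/reverse Young-type inequalities bounding the common quantity $(1-\nu)+\nu t-t^\nu$, divide through by $\nu\log t$, and then invoke the identity of Lemma \ref{sec2_lemma01.2} to rewrite the middle term. Concretely, Lemma \ref{sec2_lemma01.2} gives
\[
L_\frac{1}{2}(t,1)-L_\frac{1}{2}(t^\nu,1)=\frac{(1-\nu)+\nu t-t^\nu}{\nu\log t},
\]
so the central expression of each displayed inequality is exactly $(1-\nu)+\nu t-t^\nu$ scaled by $1/(\nu\log t)$. This is the structural key: the three upper/lower bounds in \eqref{sec2_eq17}, \eqref{sec2_eq18}, \eqref{sec2_eq19} and \eqref{sec2_eq22} are all estimates of the \emph{same} middle quantity, so dividing each by $\nu\log t$ transplants them verbatim onto $L_\frac{1}{2}(t,1)-L_\frac{1}{2}(t^\nu,1)$.

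First I would dispose of the three easy cases. For Kittaneh--Manasrah \eqref{sec2_eq17}, I would divide all three members by $\nu\log t$; since $t>1$ forces $\log t>0$, the inequalities are preserved, and the constants become $r/\nu$ and $R/\nu$ multiplying $(\sqrt t-1)^2/\log t$, which is precisely the first claim. For Cartwright--Field \eqref{sec2_eq18} and the first Alzer--Fonseca--Kova\v{c}ec estimate \eqref{sec2_eq19}, the same division is performed; in both the factor $\nu$ cancels against the $\nu$ in the prefactor $\tfrac12\nu(1-\nu)$, leaving $\tfrac{1-\nu}{2}$ times the respective geometric factors $(t-1)^2/(\max\{t,1\}\log t)$, $(t-1)^2/(\min\{t,1\}\log t)$, and $\min\{t,1\}\log t$, $\max\{t,1\}\log t$, which match the second and third claims exactly.

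The fourth inequality requires one extra substitution and is where the bookkeeping is least automatic. Starting from \eqref{sec2_eq22}, I divide by $\nu\log t$ as before; the middle term becomes $L_\frac{1}{2}(t,1)-L_\frac{1}{2}(t^\nu,1)$, but the bounds still contain the factor $(1-\nu)t+\nu-t^{1-\nu}$. To convert this into the stated form, I reapply Lemma \ref{sec2_lemma01.2} with $\nu$ replaced by $1-\nu$, yielding
\[
(1-\nu)t+\nu-t^{1-\nu}=(1-\nu)\log t\,\bigl(L_\frac{1}{2}(t,1)-L_\frac{1}{2}(t^{1-\nu},1)\bigr).
\]
Substituting this identity, the $\log t$ factors cancel and the surviving constant is $\tfrac{1-\nu}{\nu}$ times the original min (resp.\ max), producing the fourth pair of inequalities.

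Finally, for the reversed statement on $0<t<1$, the only change is that $\log t<0$, so dividing each Young-type inequality by $\nu\log t$ flips the direction of every inequality; the four derivations then run identically and deliver the reversed versions. I do not anticipate any genuine analytic difficulty here—the content is entirely contained in Lemma \ref{sec2_lemma01.2} and the quoted Young-type inequalities—so the main point of care is tracking the sign of $\log t$ and verifying the constant cancellations, particularly the double use of the lemma in the fourth case.
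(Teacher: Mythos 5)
Your proposal is correct and follows essentially the same route as the paper: the paper's proof likewise substitutes the identity of Lemma \ref{sec2_lemma01.2} into inequalities \eqref{sec2_eq17}, \eqref{sec2_eq18}, \eqref{sec2_eq19}, and \eqref{sec2_eq22} and divides by $\nu\log t$. Your write-up simply makes explicit two details the paper leaves implicit, namely the second application of the lemma with $\nu$ replaced by $1-\nu$ in the fourth inequality, and the sign of $\log t$ governing the reversal for $0<t<1$.
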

\begin{proof}
Using Lemma \ref{sec2_lemma01.2}, we find 
$$(1-\nu )+\nu t-t^\nu  = \nu \log t\left(L_\frac{1}{2}(t,1)-L_\frac{1}{2}(t^\nu ,1)\right).$$
Replacing the expression $(1-\nu )+\nu t-t^\nu $ in inequalities \eqref{sec2_eq17}, \eqref{sec2_eq18}, \eqref{sec2_eq19}, and \eqref{sec2_eq22}, we deduce the inequalities from the statement.
\end{proof} 

We can obtain the alternative expression of the difference between the weighted arithmetic mean and the weighted geometric mean by the weighted logarithmic mean and the logarithmic mean. Related to this, we state the following lemma.
\begin{lemma}\label{sec2_lemma01.3}
Let $t\in[0,1)\cup(1,\infty)$ and $\nu \in(0,1)$. Then
\begin{equation*}
    L_\nu (t,1)-L_\frac{1}{2}(t,1)=
    \frac{(2\nu -1)}{\nu (1-\nu )\log t}\left\{(1-\nu )+\nu t-t^\nu \right\}.
\end{equation*}
\end{lemma}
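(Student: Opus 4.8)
The plan is to prove the identity by brute substitution of the definitions into the left-hand side and then simplifying, the whole point being that the three terms that survive share the common factor $2\nu-1$, so that the difference factors cleanly. No inequality is needed here; this is a pure identity, so I would aim for an exact algebraic reduction rather than any estimate.

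First I would expand $L_\nu(t,1)$ directly from \eqref{8} with $a=t$, $b=1$. Since $\log 1 = 0$, this gives
\begin{equation*}
L_\nu(t,1)=\frac{1}{\log t}\left\{\frac{1-\nu}{\nu}\left(t-t^{1-\nu}\right)+\frac{\nu}{1-\nu}\left(t^{1-\nu}-1\right)\right\},
\end{equation*}
and likewise $L_{1/2}(t,1)=\frac{t-1}{\log t}$ (the same $\log 1 = 0$ normalization). Subtracting, I would pull out the common factor $1/\log t$ and regroup the resulting bracket according to the monomials $t$, the fractional power of $t$, and the constant. The constant term and the coefficient of $t$ each collapse to a multiple of $1-2\nu$ once the fractions are combined, and the coefficient of the fractional power is controlled by the identity $\frac{1-\nu}{\nu}-\frac{\nu}{1-\nu}=\frac{(1-\nu)^2-\nu^2}{\nu(1-\nu)}=\frac{1-2\nu}{\nu(1-\nu)}$, using $(1-\nu)^2-\nu^2=1-2\nu$. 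Factoring $2\nu-1$ out of the entire bracket and putting the remainder over the common denominator $\nu(1-\nu)$ then leaves exactly a Young-type functional in the numerator, producing the claimed factor $\frac{(2\nu-1)}{\nu(1-\nu)\log t}$ times that functional.

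I expect the only real obstacle to be careful bookkeeping of signs and, above all, of which fractional power of $t$ appears. Because the defining expression \eqref{8} is not symmetric in its two arguments, the exponent that survives the cancellation depends on the ordering $a\leftrightarrow b$, so one must verify that the remaining numerator is genuinely $(1-\nu)+\nu t-t^\nu$ and not its $\nu\mapsto 1-\nu$ counterpart; this matching is the single step most prone to error. A clean way to guard against a slip is to cross-check against Lemma \ref{sec2_lemma01.2}, where the identical functional $(1-\nu)+\nu t-t^\nu$ already emerges, and against the representation \eqref{rep_func_log_mean}, since the bracket in effect rewrites $L_\nu$ in terms of the previously studied function $f_\nu$ up to the elementary prefactor. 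Once the surviving exponent is pinned down and the factor $2\nu-1$ extracted, the identity follows by inspection.
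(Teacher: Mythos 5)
Your overall strategy (expand, subtract, regroup, factor out $2\nu-1$) is exactly the paper's, but your concrete starting point is the wrong one for the stated identity, and the issue you yourself flag as ``the single step most prone to error'' is in fact fatal to the proposal as written rather than a bookkeeping matter. Your displayed expansion
\begin{equation*}
L_\nu(t,1)=\frac{1}{\log t}\left\{\frac{1-\nu}{\nu}\left(t-t^{1-\nu}\right)+\frac{\nu}{1-\nu}\left(t^{1-\nu}-1\right)\right\}
\end{equation*}
is the literal substitution $a=t$, $b=1$ into \eqref{8}, carrying the power $t^{1-\nu}$. Pushing your regrouping through from this expression gives
\begin{equation*}
L_\nu(t,1)-L_{\frac{1}{2}}(t,1)=\frac{1-2\nu}{\nu(1-\nu)\log t}\left\{\nu+(1-\nu)t-t^{1-\nu}\right\},
\end{equation*}
which is \emph{not} the claimed formula: exponent, weights, and overall sign all come out as the $\nu\mapsto 1-\nu$ counterpart, and for fixed $\nu\neq \frac{1}{2}$ the two expressions genuinely differ. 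For instance, for $t>1$ and $\nu<\frac{1}{2}$ the display above is positive (your expansion weights $t$ by $1-\nu>\frac{1}{2}$), while the lemma's right-hand side is negative, since $2\nu-1<0$ and $(1-\nu)+\nu t-t^\nu>0$ by Young's inequality. So carried out faithfully, your computation would contradict the statement as you read it, not prove it.

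The resolution is that the lemma, and the paper's own proof, are really about the representing function $f_\nu(t)=L_\nu(1,t)$ of \eqref{rep_func_log_mean}: the paper's first line is
\begin{equation*}
L_\nu(t,1)-L_{\frac{1}{2}}(t,1)=\frac{1}{\log t}\left(\frac{1-\nu}{\nu}(t^\nu-1)+\frac{\nu}{1-\nu}(t-t^\nu)-t+1\right),
\end{equation*}
i.e.\ the expansion with the power $t^\nu$; the argument order ``$L_\nu(t,1)$'' in the statement is a notational slip for $L_\nu(1,t)$ (from \eqref{8} one checks $L_\nu(a,b)=L_{1-\nu}(b,a)$, so the literal $L_\nu(t,1)$ equals $f_{1-\nu}(t)$, not $f_\nu(t)$). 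With that reading, your regrouping goes through verbatim: the coefficients of $t$ and of the constant collapse to multiples of $1-2\nu$, and $\frac{1-\nu}{\nu}-\frac{\nu}{1-\nu}=\frac{1-2\nu}{\nu(1-\nu)}$ handles the fractional power, yielding exactly the stated right-hand side. To repair your proof, either replace your starting expansion by that of $f_\nu(t)=L_\nu(1,t)$, or keep it, derive the counterpart identity displayed above, and then substitute $\nu\mapsto 1-\nu$ and use $L_{1-\nu}(t,1)=L_\nu(1,t)$. Your proposed cross-check against \eqref{rep_func_log_mean} and Lemma \ref{sec2_lemma01.2} is the right instinct, but it cannot be deferred: it is the decisive step, and performing it forces a change of starting point.
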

\begin{proof}
Making the difference between the weighted logarithmic mean and the logarithmic mean of $t$ and $1$, we have: 
\begin{equation*}
    \begin{aligned}
    L_\nu (t,1)-L_\frac{1}{2}(t,1) 
    &= 
    \frac{1}{\log t}\left(\frac{1-\nu }{\nu }(t^\nu -1)+\frac{\nu }{1-\nu }(t-t^\nu )-t+1)\right)
        \\&=
    \frac{1}{\log t}\left\{\left(\frac{1-\nu }{\nu }-\frac{\nu }{1-\nu }\right)t^\nu +\left(\frac{\nu }{1-\nu }-1\right)t+1-\frac{1-\nu }{\nu }\right\}
        \\&=
    \frac{1-2\nu }{\log t}\left\{\frac{1}{\nu (1-\nu )}t^\nu -\frac{1}{1-\nu }t-\frac{1}{\nu }\right\}
        \\&=
    \frac{1-2\nu }{\nu (1-\nu )\log t}\left\{t^\nu -\nu t-(1-\nu )\right\}
    \end{aligned}
\end{equation*}
for all $t>0$, $t\neq 1$ and $\nu \in(0,1)$. 
\end{proof}

\begin{remark}
Using Lemma \ref{sec2_lemma01.3}, we can obtain similar results like Proposition \ref{sec2_theorem01.2} with the help of inequalities \eqref{sec2_eq17}, \eqref{sec2_eq18}, \eqref{sec2_eq19}, and \eqref{sec2_eq22}. However, we leave them for interested readers.
\end{remark}

Inequality \eqref{sec1_eq01} can be improved by using Theorem \ref{sec1_theorem01}. Indeed, we have:
\begin{theorem}\label{theorem_2.8.}
Let $a,b>0$, $a\neq b$. Then for any $\nu \in(0,1)$, 
\[\begin{aligned}
   a{{\sharp}_{\nu }}b&\le \left( a{{\sharp}_{\frac{3\nu }{4}}}b \right){{\nabla }_{\nu }}\left( a{{\sharp}_{\frac{1+3\nu }{4}}}b \right) \\ 
 & \le \left( \sqrt{a}{{\sharp}_{\nu }}\sqrt{b} \right){{L}_{\nu }}\left( \sqrt{a},\sqrt{b} \right) \\ 
 & \le \left( a{{\sharp}_{\nu }}b \right)\nabla \left( \left( a{{\sharp}_{\frac{\nu }{2}}}b \right){{\nabla }_{\nu }}\left( a{{\sharp}_{\frac{1+\nu }{2}}}b \right) \right) \\ 
 & \le \left( a{{\sharp}_{\frac{\nu }{2}}}b \right){{\nabla }_{\nu }}\left( a{{\sharp}_{\frac{1+\nu }{2}}}b \right) \\ 
 & \le {{L}_{\nu }}\left( a,b \right) \\ 
 & \le \left( a{{\sharp}_{\nu }}b \right){{\nabla }_{\nu }}\left( a{{\nabla }_{\nu }}b \right) \\ 
 & \le a{{\nabla }_{\nu }}b.  
\end{aligned}\]
\end{theorem}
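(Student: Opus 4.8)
The plan is to read the seven-link chain as a refinement of Corollary~\ref{sec1_cor01}. Indeed, the quantities in positions one, five, six, seven and eight, namely $a\sharp_\nu b$, $(a\sharp_{\nu/2}b)\nabla_\nu(a\sharp_{(1+\nu)/2}b)$, $L_\nu(a,b)$, the penultimate term, and $a\nabla_\nu b$, are precisely the consecutive terms of Corollary~\ref{sec1_cor01} written for the pair $(a,b)$, while the three inserted quantities in positions two, three and four merely subdivide the first gap $a\sharp_\nu b\le (a\sharp_{\nu/2}b)\nabla_\nu(a\sharp_{(1+\nu)/2}b)$. I would therefore organize the proof in two layers: first produce the links $1\to 2\to 3$ from Corollary~\ref{sec1_cor01} applied to the pair $(\sqrt a,\sqrt b)$ after rescaling, then bridge position three to position four and let the corollary for $(a,b)$ close out the remainder.

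For the first layer I set $G:=\sqrt a\sharp_\nu\sqrt b$, a positive scalar with $G^2=a\sharp_\nu b$, and apply Corollary~\ref{sec1_cor01} to $(\sqrt a,\sqrt b)$, which gives $\sqrt a\sharp_\nu\sqrt b\le(\sqrt a\sharp_{\nu/2}\sqrt b)\nabla_\nu(\sqrt a\sharp_{(1+\nu)/2}\sqrt b)\le L_\nu(\sqrt a,\sqrt b)$. Multiplying this chain by $G$ and using that multiplication by a positive scalar commutes with $\nabla_\nu$ (and with $\nabla$), the middle term becomes $\big(G\,\sqrt a\sharp_{\nu/2}\sqrt b\big)\nabla_\nu\big(G\,\sqrt a\sharp_{(1+\nu)/2}\sqrt b\big)$. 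The exponent identities $G\cdot(\sqrt a\sharp_{\nu/2}\sqrt b)=a\sharp_{3\nu/4}b$ and $G\cdot(\sqrt a\sharp_{(1+\nu)/2}\sqrt b)=a\sharp_{(1+3\nu)/4}b$, together with $G^2=a\sharp_\nu b$, then identify the three rescaled quantities with positions one, two and three; this is routine exponent bookkeeping.

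The decisive step is position three $\le$ position four. Here I would invoke the upper bound for the representing function recalled in the Introduction, $f_\nu(s)\le\tfrac12\big(s^\nu+(1-\nu)+\nu s\big)$, which by homogeneity of $L_\nu$ reads $L_\nu(x,y)\le (x\sharp_\nu y)\nabla(x\nabla_\nu y)$. Applying it at $(x,y)=(\sqrt a,\sqrt b)$ and multiplying by $G$ yields $G\,L_\nu(\sqrt a,\sqrt b)\le G^2\,\nabla\,\big(G(\sqrt a\nabla_\nu\sqrt b)\big)$; expanding $G(\sqrt a\nabla_\nu\sqrt b)=(1-\nu)(G\sqrt a)+\nu(G\sqrt b)$ and using $G\sqrt a=a\sharp_{\nu/2}b$, $G\sqrt b=a\sharp_{(1+\nu)/2}b$, $G^2=a\sharp_\nu b$ rewrites the right-hand side exactly as position four, $(a\sharp_\nu b)\nabla\big((a\sharp_{\nu/2}b)\nabla_\nu(a\sharp_{(1+\nu)/2}b)\big)$.

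It remains to close the chain. Position four $\le$ position five is immediate, since position four is the $\tfrac12$-mean of $a\sharp_\nu b$ and position five, and $a\sharp_\nu b\le$ position five is the first inequality of Corollary~\ref{sec1_cor01} for $(a,b)$, so the $\tfrac12$-mean cannot exceed position five. The links five $\le$ six, six $\le$ seven, seven $\le$ eight are, respectively, the second inequality of the corollary for $(a,b)$, the representing-function midpoint bound $L_\nu(a,b)\le (a\sharp_\nu b)\nabla(a\nabla_\nu b)$ evaluated at $(a,b)$, and the weighted arithmetic–geometric mean inequality $a\sharp_\nu b\le a\nabla_\nu b$, which forces the midpoint of $a\sharp_\nu b$ and $a\nabla_\nu b$ below $a\nabla_\nu b$. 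I expect the only genuine obstacle to be the bridging inequality position three $\le$ position four: the insight required is that the right instrument is the midpoint representing-function bound taken at the square-root points, after which rescaling by $G$ transports the square-root-scale geometric means onto the $\sharp_{3\nu/4},\sharp_{(1+3\nu)/4}$ and $\sharp_{\nu/2},\sharp_{(1+\nu)/2}$ indices demanded by the statement; everything else reduces to exponent arithmetic and the linearity of $\nabla_\nu$ under positive scaling.
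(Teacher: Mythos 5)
Your proposal is correct and takes essentially the same route as the paper: the paper's proof obtains the scalar chain \eqref{sec2_eq12_02} by evaluating the bounds \eqref{sec2_eq11} at $t^{1/2}$ and multiplying by $t^{\nu/2}$ (then setting $t=b/a$ and scaling by $a$), which under homogenization is precisely your application of Corollary \ref{sec1_cor01} and the midpoint bound $L_\nu(x,y)\le (x\sharp_\nu y)\nabla(x\nabla_\nu y)$ at $(\sqrt{a},\sqrt{b})$ followed by multiplication by $G=\sqrt{a}\,\sharp_\nu \sqrt{b}$, and your midpoint argument for the fourth link is the paper's combination of the first two inequalities of \eqref{sec2_eq11}. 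One point worth recording: like the paper's own proof, you establish the seventh term as $(a\sharp_\nu b)\nabla(a\nabla_\nu b)$ with the unweighted $\nabla$; the $\nabla_\nu$ printed in the theorem statement is evidently a typo, since $L_\nu(a,b)\le (a\sharp_\nu b)\nabla_\nu(a\nabla_\nu b)$ is false in general (e.g.\ $\nu=0.1$, $a=1$, $b=e^{2}$ gives $L_\nu(a,b)\approx 1.339 > 1.263$).
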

\begin{proof}
We set $a=0$, $b=1$, and $f(\lambda )=t^\lambda ,\,\,(t>0)$ in Theorem \ref{sec1_theorem01}. Then we have
$$
    t^\nu 
    \leq \mathfrak R_{t^\lambda ,\nu }^{(1)}(0,1)
    \leq \mathfrak C _{t^\lambda ,\nu }(0,1)
    \leq \mathfrak R_{t^\lambda ,\nu }^{(2)}(t^\lambda ,1)
    \leq \nu t+(1-\nu ),
$$
where
$$ 
    \mathfrak R_{t^\lambda ,\nu }^{(1)}(0,1)=t^\frac{\nu }{2}\nabla_\nu  t^\frac{1+\nu }{2}=(1-\nu )t^\frac{\nu }{2}+\nu t^\frac{1+\nu }{2},
$$
and
$$
    \mathfrak R_{t^\lambda ,\nu }^{(2)}(0,1)=(\nu t+(1-\nu ))\nabla t^\nu =\frac{1}{2}[t^\nu +(1-\nu )+\nu t].
$$
That is, we obtain
\begin{equation}\label{sec2_eq11}
    t^\nu 
    \leq (1-\nu )t^\frac{\nu }{2}+\nu t^\frac{1+\nu }{2}
    \leq f_\nu (t)
    \leq \frac{1}{2}\left(t^\nu +(1-\nu )+\nu t\right)
    \leq \nu t+(1-\nu )
\end{equation}
for all $t\in[0,1)\cup(1,\infty)$ and $\nu \in(0,1)$.

If we replace $t$ by $t^\frac{1}{2}$ in inequality \eqref{sec2_eq11}, then we deduce the following sequence of inequalities:
$$
    t^\frac{\nu }{2}
    \leq(1-\nu )t^\frac{\nu }{4}+\nu t^\frac{1+\nu }{4}
    \leq f_\nu (t^\frac{1}{2})
    \leq \frac{1}{2}\left(t^\frac{\nu }{2}+(1-\nu )+\nu t^\frac{1}{2}\right)
    \leq \nu t^\frac{1}{2}+(1-\nu ).
$$
Multiplying by $t^\frac{\nu }{2}$ the above sequence of inequalities, we have
\begin{equation*}
    t^\nu 
    \leq (1-\nu )t^\frac{3\nu }{4}+\nu t^\frac{1+3\nu }{4}
    \leq t^\frac{\nu }{2}f_\nu (t^\frac{1}{2})
    \leq \frac{1}{2}\left(t^\nu +(1-\nu )t^\frac{\nu }{2}+\nu t^\frac{\nu +1}{2}\right)
    \leq \nu t^\frac{\nu +1}{2}+(1-\nu )t^\frac{\nu }{2},
\end{equation*}
for all $t\in (0,1)\cup(1,\infty)$ and $\nu \in (0,1)$.
From the first and the second inequalities in \eqref{sec2_eq11}, we find 
\begin{equation*}
\frac{1}{2}\left(t^\nu +(1-\nu )t^{\frac{\nu }{2}}+\nu t^{\frac{1+\nu }{2}}\right)\le (1-\nu )t^{\frac{\nu }{2}}+\nu t^{\frac{1+\nu }{2}}\le f_\nu (t).
\end{equation*}
Thus we have the inequalities
\begin{equation}\label{sec2_eq12_02}
\begin{aligned}
   {{t}^{\nu }}&\le \left( 1-\nu  \right){{t}^{\frac{3\nu }{4}}}+\nu {{t}^{\frac{1+3\nu }{4}}} \\ 
 & \le {{t}^{\frac{\nu }{2}}}{{f}_{\nu }}\left( {{t}^{\frac{1}{2}}} \right) \\ 
 & \le \frac{1}{2}\left( {{t}^{\nu }}+\left( 1-\nu  \right){{t}^{\frac{\nu }{2}}}+\nu {{t}^{\frac{1+\nu }{2}}} \right) \\ 
 & \le \left( 1-\nu  \right){{t}^{\frac{\nu }{2}}}+\nu {{t}^{\frac{1+\nu }{2}}} \\ 
 & \le {{f}_{\nu }}\left( t \right) \\ 
 & \le \frac{1}{2}\left( {{t}^{\nu }}+\left( 1-\nu  \right)+\nu t \right) \\ 
 & \le \left( 1-\nu  \right)+\nu t.  
\end{aligned}
\end{equation}
Putting $t=\frac{b}{a}\neq 1$ in inequalities \eqref{sec2_eq12_02} and multiplying by $a$ to both sides, we deduce the sequence of inequalities.
\end{proof}

The following corollary gives an interpolation between the weighted geometric mean and the weighted logarithmic mean by the self-improving inequality technique.
\begin{corollary}\label{sec2_corollary2.9.}
Let $m\in\mathbb{N}$ and $0<\nu <1$. Then for any $t>0$, 
{\small
\begin{equation}\label{sec2_corollary2.9._ineq01}
\begin{aligned}
 & t^v\le \cdots \le t^{\left(1-\frac{1}{2^m}\right)\nu }f_\nu \left(t^{\frac{1}{2^m}}\right)\le t^{\left(1-\frac{1}{2^{m-1}}\right)\nu }f_\nu \left(t^{\frac{1}{2^{m-1}}}\right)\le\cdots\le t^{\left(1-\frac{1}{4}\right)\nu }f_\nu \left(t^{\frac{1}{4}}\right) \le t^{\frac{\nu }{2}}f_\nu \left(t^{\frac{1}{2}}\right)\le f_\nu (t),
\end{aligned}
\end{equation}
}
where the function $f_\nu (t)$ is defined as in \eqref{rep_func_log_mean}.
\end{corollary}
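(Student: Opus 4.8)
The plan is to exploit the single refinement inequality $t^{\nu/2}f_\nu(t^{1/2})\le f_\nu(t)$ already hidden inside the chain \eqref{sec2_eq12_02}, together with the elementary lower bound $t^\nu\le f_\nu(t)$ from \eqref{sec2_eq11}, and to iterate (``self-improve'') them by repeatedly substituting $t\mapsto t^{1/2}$. First I would isolate these two ingredients: the subchain of \eqref{sec2_eq12_02} running from $t^{\nu/2}f_\nu(t^{1/2})$ down to $f_\nu(t)$ gives
\[
t^{\nu/2}f_\nu\!\left(t^{1/2}\right)\le f_\nu(t),\qquad (t>0),
\]
while the top of \eqref{sec2_eq11} gives $t^\nu\le f_\nu(t)$. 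The case $t=1$ is trivial since every term equals $1$, so I may assume $t\neq 1$. I then introduce the sequence
\[
a_k(t):=t^{\left(1-1/2^k\right)\nu}\,f_\nu\!\left(t^{1/2^k}\right),\qquad k=0,1,2,\dots,
\]
noting that $a_0(t)=f_\nu(t)$ and that $a_k(t)$ coincides with the entries displayed in \eqref{sec2_corollary2.9._ineq01}.

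The core of the argument is the monotonicity $a_{k+1}(t)\le a_k(t)$. To obtain it I would apply the refinement inequality with $t$ replaced by $t^{1/2^k}$, namely $t^{\nu/2^{k+1}}f_\nu(t^{1/2^{k+1}})\le f_\nu(t^{1/2^k})$, and then multiply through by the positive factor $t^{(1-1/2^k)\nu}$. The right-hand side becomes $a_k(t)$, while the exponent on the left collapses via $\bigl(1-\tfrac{1}{2^k}\bigr)\nu+\tfrac{\nu}{2^{k+1}}=\bigl(1-\tfrac{1}{2^{k+1}}\bigr)\nu$, so the left-hand side is exactly $a_{k+1}(t)$. This telescoping identity in the exponents is the one computation that must be checked with care, and it is the crux of the whole proof.

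For the lower bound I would run the same scaling trick on $t^\nu\le f_\nu(t)$: substituting $t\mapsto t^{1/2^k}$ gives $t^{\nu/2^k}\le f_\nu(t^{1/2^k})$, and multiplying by $t^{(1-1/2^k)\nu}$ produces $t^\nu\le a_k(t)$ for every $k$, since $\bigl(1-\tfrac{1}{2^k}\bigr)\nu+\tfrac{\nu}{2^k}=\nu$. Chaining the monotonicity from $k=0$ down to $k=m$ with this uniform lower bound yields
\[
t^\nu\le a_m(t)\le a_{m-1}(t)\le\cdots\le a_1(t)\le a_0(t)=f_\nu(t),
\]
which is precisely \eqref{sec2_corollary2.9._ineq01}. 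I expect no genuine obstacle beyond this exponent bookkeeping; the only subtlety worth recording is that $t^\nu$ is not merely a lower bound but the limit of $a_m(t)$ as $m\to\infty$ (because $t^{1/2^m}\to1$ and $f_\nu$ is continuous with $f_\nu(1)=1$), which accounts for the ``$\cdots$'' preceding $t^\nu$ in the displayed chain.
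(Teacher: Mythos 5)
Your proof is correct and follows essentially the same route as the paper: both iterate the inequality $t^{\nu/2}f_\nu(t^{1/2})\le f_\nu(t)$ (extracted from the proof of Theorem \ref{theorem_2.8.}) under the substitution $t\mapsto t^{1/2^{k}}$ and multiply by the appropriate power of $t$, with the same exponent bookkeeping. Your only (minor) addition is proving $t^\nu\le a_k(t)$ directly from the scaled bound $t^{\nu/2^k}\le f_\nu(t^{1/2^k})$, whereas the paper obtains the leftmost term $t^\nu$ as the limit $m\to\infty$, a point you also record.
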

\begin{proof}
It is sufficient to prove the third inequality in \eqref{sec2_corollary2.9._ineq01} for any $m\in \mathbb{N}$. In the process of the proof of Theorem \ref{theorem_2.8.}, we found the inequality $t^{\frac{\nu }{2}}f_\nu \left(t^{\frac{1}{2}}\right)\le f_\nu (t)$ for $t>0$ and $0<\nu <1$. In this inequality, we put $t=s^{\frac{1}{2^{m-1}}}$. Then we have $s^{\frac{\nu }{2^m}}f_\nu \left(s^{\frac{1}{2^m}}\right)\le f_\nu \left(s^{\frac{1}{2^{m-1}}}\right)$. Multiplying $s^{\frac{2^{m-1}-1}{2^{m-1}}\nu }$ to both sides of this inequality, we get the third inequality in \eqref{sec2_corollary2.9._ineq01}.
Taking $m\to \infty$, we have $t^{\left(1-\frac{1}{2^m}\right)\nu }f_\nu \left(t^{\frac{1}{2^m}}\right)\to t^v$, since $f_v(1)=\lim\limits_{t\to1}f_v(t)=1$.
\end{proof}

Before expressing the next result, we recall an interesting inequality for convex functions \cite{drag}: If $f$ is a convex function on the interval $J\subseteq \mathbb{R}$, then for any $x,y\in J$,
\begin{equation}\label{2}
2r\left( f\left( x  \right)\nabla f\left( y \right)-f\left( x \nabla y \right) \right)
\le f\left( x  \right){{\nabla }_{t}}f\left( y \right)-f\left( x {{\nabla }_{t}}y \right)
\end{equation}
holds, where $r=\min \left\{ t,1-t \right\}$ and $0 \le t \le 1$. In the same paper, it has been shown that
\begin{equation}\label{5}
f\left( x \right){{\nabla }_{t}}f\left( y \right)-f\left( x {{\nabla }_{t}}y \right)\le 2R\left( f\left( x  \right)\nabla f\left( y \right)-f\left( x \nabla y \right) \right)
\end{equation}
where $R=\max \left\{ t,1-t \right\}$. 

The following theorem provides an improvement and a reverse for the first inequality in  \eqref{Pal_gen_HHI}, with the help of \eqref{2} and \eqref{5}.
\begin{theorem}\label{3}
Let $f : [a, b] \to \mathbb R$ be a convex function. Then for any $0 \le \nu  \le 1$,
\[2r\int\limits_{0}^{1}{\left( \left( f\left( a{{\nabla }_{\nu \lambda }}b \right)\nabla f\left( b{{\nabla }_{\left( 1-\nu  \right)\lambda }}a \right) \right)-f\left( a{{\nabla }_{\frac{1+\lambda \left( 2\nu -1 \right)}{2}}}b \right) \right)d\lambda }\le {{\mathfrak C }_{f,\nu }}\left( a,b \right)-f\left( a{{\nabla }_{\nu }}b \right),\]
and
\[{{\mathfrak C }_{f,\nu }}\left( a,b \right)-f\left( a{{\nabla }_{\nu }}b \right)\le 2R\int\limits_{0}^{1}{\left( \left( f\left( a{{\nabla }_{\nu \lambda }}b \right)\nabla f\left( b{{\nabla }_{\left( 1-\nu  \right)\lambda }}a \right) \right)-f\left( a{{\nabla }_{\frac{1+\lambda \left( 2\nu -1 \right)}{2}}}b \right) \right)d\lambda },\]
where $r=\min \left\{ \nu ,1-\nu  \right\}$ and $R=\max \left\{ \nu ,1-\nu  \right\}$.
\end{theorem}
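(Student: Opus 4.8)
The plan is to reduce both inequalities to the pointwise two-point estimates \eqref{2} and \eqref{5}, applied along a one-parameter family of pairs $(x,y)$, and then to integrate in $\lambda$. Concretely, for each fixed $\lambda\in[0,1]$ I would set $x=a\nabla_{\nu\lambda}b$ and $y=b\nabla_{(1-\nu)\lambda}a$, and take $t=\nu$ in \eqref{2} and \eqref{5}. All these points are convex combinations of $a$ and $b$, hence lie in the interval on which $f$ is convex, so the two-point inequalities apply.

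First I would identify the integrand on the right-hand sides with the two-point defect $f(x)\nabla f(y)-f(x\nabla y)$. Using $b\nabla_\beta a=a\nabla_{1-\beta}b$ one may write $y=a\nabla_{1-(1-\nu)\lambda}b$, and then the composition rule $\left(a\nabla_\alpha b\right)\nabla_\gamma\left(a\nabla_\beta b\right)=a\nabla_{(1-\gamma)\alpha+\gamma\beta}b$ with $\alpha=\nu\lambda$, $\beta=1-(1-\nu)\lambda$, $\gamma=\tfrac12$ gives $x\nabla y=a\nabla_{\frac{1+\lambda(2\nu-1)}{2}}b$. Hence the integrand is exactly $f(x)\nabla f(y)-f(x\nabla y)$, matching the left-hand input of \eqref{2} and \eqref{5}.

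Next, applying \eqref{2} with $t=\nu$ (so $r=\min\{\nu,1-\nu\}$) gives, for every $\lambda$, the bound $2r\bigl(f(x)\nabla f(y)-f(x\nabla y)\bigr)\le f(x)\nabla_\nu f(y)-f(x\nabla_\nu y)$, and integrating over $\lambda\in[0,1]$ produces the claimed left inequality provided the right-hand side integrates correctly. For this I would evaluate the two resulting integrals separately. The term $\int_0^1 f(x)\nabla_\nu f(y)\,d\lambda=(1-\nu)\int_0^1 f(a\nabla_{\nu\lambda}b)\,d\lambda+\nu\int_0^1 f(b\nabla_{(1-\nu)\lambda}a)\,d\lambda$ equals $\mathfrak C_{f,\nu}(a,b)$ by definition \eqref{sec1_eq02.1.0}. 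The crucial simplification is the second term: applying the same composition rule with $\gamma=\nu$ gives $(1-\gamma)\alpha+\gamma\beta=(1-\nu)\nu\lambda+\nu\bigl(1-(1-\nu)\lambda\bigr)=\nu$, so $x\nabla_\nu y=a\nabla_\nu b$ is constant in $\lambda$, whence $\int_0^1 f(x\nabla_\nu y)\,d\lambda=f(a\nabla_\nu b)$. Combining the two evaluations yields $\mathfrak C_{f,\nu}(a,b)-f(a\nabla_\nu b)$ on the right, which is the first inequality. The second inequality follows in exactly the same way by replacing \eqref{2} with \eqref{5} and $r$ with $R$.

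The only real obstacle is the bookkeeping with the weighted-mean composition identities: one must confirm both that the midpoint $x\nabla y$ equals the $\tfrac{1+\lambda(2\nu-1)}{2}$-weighted mean appearing in the statement and, more importantly, that $x\nabla_\nu y$ collapses to the $\lambda$-independent point $a\nabla_\nu b$. This collapse is exactly what lets the $f(x\nabla_\nu y)$ integral reduce to the single value $f(a\nabla_\nu b)$; without it the right-hand side would not telescope into $\mathfrak C_{f,\nu}(a,b)-f(a\nabla_\nu b)$. Everything else is linearity of the integral together with a direct appeal to \eqref{2} and \eqref{5}.
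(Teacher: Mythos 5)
Your proposal is correct and follows essentially the same route as the paper's own proof: substituting $x=a\nabla_{\nu\lambda}b$, $y=b\nabla_{(1-\nu)\lambda}a$ into \eqref{2} and \eqref{5} with $t=\nu$, using the composition rule to show $x\nabla_\nu y=a\nabla_\nu b$ is independent of $\lambda$, and integrating over $\lambda\in[0,1]$ so that the $\nabla_\nu$-term integrates to $\mathfrak C_{f,\nu}(a,b)$. The only difference is presentational: you make explicit the midpoint identity $x\nabla y=a\nabla_{\frac{1+\lambda(2\nu-1)}{2}}b$ and the linearity step identifying the integral with $\mathfrak C_{f,\nu}(a,b)$, which the paper leaves implicit.
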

\begin{proof}
By substituting $x =a\nabla_{\nu \lambda } b$  and $y=b\nabla_{(1-\nu )\lambda }a$, in \eqref{2}, we obtain
\[\begin{aligned}
   f\left( \left( a\nabla_{\nu \lambda } b \right){{\nabla }_{\nu }}\left( b\nabla_{(1-\nu )\lambda }a \right) \right)  &\le f\left( a\nabla_{\nu \lambda } b \right){{\nabla }_{\nu }}f\left( b\nabla_{(1-\nu )\lambda }a\right) \\ 
 &\quad -2r\left( f\left( a\nabla_{\nu \lambda } b \right)\nabla f\left(b\nabla_{(1-\nu )\lambda }a \right)
 -f\left( \left( a\nabla_{\nu \lambda } b\right)\nabla \left( b\nabla_{(1-\nu )\lambda }a \right) \right) \right).  
\end{aligned}\]
Since $\left(a\nabla_{\nu \lambda }b\right)\nabla_\nu \left(b\nabla_{(1-\nu )\lambda }a\right)=\left(a\nabla_{\nu \lambda }b\right)\nabla_\nu \left(a\nabla_{1-(1-\nu )\lambda }b\right)=a\nabla_{(1-\nu )\nu \lambda +\nu (1-(1-\nu )\lambda )}b=a\nabla_\nu b$, we have
\[f\left( a{{\nabla }_{\nu }}b \right)=f\left( \left( a\nabla_{\nu \lambda } b\right){{\nabla }_{\nu }}\left( b\nabla_{(1-\nu )\lambda }a \right) \right).\]
Consequently, we prove
\[\begin{aligned}
   f\left( a{{\nabla }_{\nu }}b \right)&\le f\left( a{{\nabla }_{\nu \lambda }}b \right){{\nabla }_{\nu}}f\left( b{{\nabla }_{\left( 1-\nu  \right)\lambda }}a \right) \\ 
 &\quad -2r\left( f\left( a{{\nabla }_{\nu \lambda }}b \right)\nabla f\left( b{{\nabla }_{\left( 1-\nu  \right)\lambda }}a \right)\left. -f\left( a{{\nabla }_{\frac{1+\left( 2\nu -1 \right)\lambda }{2}}}b \right) \right) \right).  
\end{aligned}\]
By taking integral over $\lambda  \in \left[ 0,1 \right]$, we reach to
\[\begin{aligned}
   f\left( a{{\nabla }_{\nu }}b \right)&\le \int\limits_{0}^{1}{\left( f\left( a{{\nabla }_{\nu \lambda }}b \right){{\nabla }_{\nu}}f\left( b{{\nabla }_{\left( 1-\nu  \right)\lambda }}a \right) \right)d\lambda } \\ 
 &\quad -2r\int\limits_{0}^{1}{\left( f\left( a{{\nabla }_{\nu \lambda }}b \right)\nabla f\left( b{{\nabla }_{\left( 1-\nu  \right)\lambda }}a \right)\left. -f\left( a{{\nabla }_{\frac{1+\left( 2\nu -1 \right)\lambda }{2}}}b \right) \right) \right)}d\lambda,  
\end{aligned}\]
which is the first inequality.
The second inequality follows likewise by employing inequality \eqref{5} instead of inequality \eqref{2}.
\end{proof}

\begin{remark}\label{remark_2.12_for_alpha}
Note that $$
f\left( a{{\nabla }_{\nu \lambda }}b \right)\nabla f\left( b{{\nabla }_{\left( 1-\nu  \right)\lambda }}a \right) -f\left( a{{\nabla }_{\frac{1+\lambda \left( 2\nu -1 \right)}{2}}}b \right)  \ge 0,\quad (0\le \nu,\,\lambda \le 1,\,\,a,b>0)
$$
by the convexity of $f$.
\end{remark}

The following corollary gives a refinement and a reverse for the inequality $a^{1-\nu }b^\nu \le L_\nu (a,b)$.
\begin{corollary}
Let $a,b>0$ and $0<\nu<1$ with $\nu \neq 1/2$. Then
\begin{equation}\label{cor_2.13_eq01}
\frac{r}{\log b-\log a}\left(\frac{a^{1-\nu }b^\nu -a}{\nu }+\frac{b-a^{1-\nu }b^\nu }{1-\nu }-\frac{4(\sqrt{ab}-a^{1-\nu }b^\nu )}{1-2\nu }\right)\le L_\nu (a,b)-a^{1-\nu }b^\nu, 
\end{equation}
and
\begin{equation}\label{cor_2.13_eq02}
L_\nu (a,b)-a^{1-\nu }b^\nu \le \frac{R}{\log b-\log a}\left(\frac{a^{1-\nu }b^\nu -a}{\nu }+\frac{b-a^{1-\nu }b^\nu }{1-\nu }-\frac{4(\sqrt{ab}-a^{1-\nu }b^\nu )}{1-2\nu }\right),
\end{equation}
where $r=\min \left\{ \nu ,1-\nu  \right\}$ and $R=\max \left\{ \nu ,1-\nu  \right\}$. In the limit of $\nu \to 1/2$, both sides in inequalities \eqref{cor_2.13_eq01} and \eqref{cor_2.13_eq02} coincide.
\end{corollary}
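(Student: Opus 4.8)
The plan is to specialize Theorem \ref{3} to the exponential function and then homogenize. Concretely, I would fix $t>0$ with $t\neq 1$ and apply Theorem \ref{3} to the convex function $f(\lambda)=t^\lambda$ on $[0,1]$ with the endpoint choice $a=0$, $b=1$ (note $t^\lambda=e^{\lambda\log t}$ is indeed convex). With these substitutions the two flanking quantities are immediate: by Remark \ref{1}(iii) together with \eqref{rep_func_log_mean} one has $\mathfrak C_{t^\lambda,\nu}(0,1)=f_\nu(t)=L_\nu(1,t)$, while $f\!\left(0\nabla_\nu 1\right)=f(\nu)=t^\nu$. Hence Theorem \ref{3} reads $2r\,I(t)\le L_\nu(1,t)-t^\nu\le 2R\,I(t)$, where $I(t)$ denotes the integral of that theorem evaluated at $f(\lambda)=t^\lambda$, $a=0$, $b=1$, and $I(t)\ge 0$ by Remark \ref{remark_2.12_for_alpha}, so the lower bound is a genuine refinement.

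The core computation is to evaluate $I(t)$ in closed form. Under the substitutions one finds $a\nabla_{\nu\lambda}b=\nu\lambda$, $b\nabla_{(1-\nu)\lambda}a=1-(1-\nu)\lambda$, and $a\nabla_{(1+\lambda(2\nu-1))/2}b=\tfrac{1+\lambda(2\nu-1)}{2}$, so the integrand becomes $\tfrac12\bigl(t^{\nu\lambda}+t^{1-(1-\nu)\lambda}\bigr)-t^{(1+\lambda(2\nu-1))/2}$. Each term is an elementary exponential in $\lambda$, and integrating over $[0,1]$ (this is precisely where the hypothesis $\nu\neq 1/2$ enters, since the third antiderivative carries a factor $1/(2\nu-1)$) gives
\[
2I(t)=\frac{1}{\log t}\left(\frac{t^\nu-1}{\nu}+\frac{t-t^\nu}{1-\nu}-\frac{4(\sqrt t-t^\nu)}{1-2\nu}\right).
\]
I expect this integration, with careful bookkeeping of the exponents and signs, to be essentially the only real work in the argument: it is routine but must be carried out accurately.

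It then remains to homogenize. Setting $t=b/a$ and multiplying the chain $2r\,I(t)\le L_\nu(1,t)-t^\nu\le 2R\,I(t)$ through by $a>0$ converts every term into the desired one. By homogeneity of the weighted logarithmic mean, $a\,L_\nu(1,b/a)=L_\nu(a,b)$ and $a\,t^\nu=a^{1-\nu}b^\nu$, while $a(t^\nu-1)=a^{1-\nu}b^\nu-a$, $a(t-t^\nu)=b-a^{1-\nu}b^\nu$, $a(\sqrt t-t^\nu)=\sqrt{ab}-a^{1-\nu}b^\nu$, and $\log t=\log b-\log a$. Since $a>0$ multiplication preserves both inequalities and leaves $r,R$ unchanged, this produces exactly \eqref{cor_2.13_eq01} and \eqref{cor_2.13_eq02}.

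Finally, for the coincidence of the two bounds as $\nu\to 1/2$, I would observe that $r=R=1/2$ at $\nu=1/2$, so the two bounds differ only through the common bracketed factor; it therefore suffices to check that this factor has a finite limit. The only delicate point is the removable singularity in $\dfrac{4(\sqrt{ab}-a^{1-\nu}b^\nu)}{1-2\nu}$, a $0/0$ form as $\nu\to 1/2$. A single application of L'Hôpital's rule (or a first-order expansion of $a^{1-\nu}b^\nu=a\,(b/a)^\nu$ about $\nu=1/2$) shows the limit is finite, and both bounds collapse to $L(a,b)-\sqrt{ab}$, which is also the limit of the middle quantity $L_\nu(a,b)-a^{1-\nu}b^\nu$. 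This is the least substantive step; the genuine obstacle is the explicit evaluation of $I(t)$ in the second paragraph.
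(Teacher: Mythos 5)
Your proposal is correct and takes essentially the same approach as the paper: the paper applies Theorem \ref{3} to $f(t)=e^{t}$ and at the end replaces $e^{a},e^{b}$ by $a,b$, which is the same exponential specialization as your $f(\lambda)=t^{\lambda}$ on $[0,1]$ followed by homogenization via $t=b/a$. Your closed form for $2I(t)$ and the limit $\lim_{\nu\to 1/2}\frac{4(\sqrt{ab}-a^{1-\nu}b^{\nu})}{1-2\nu}=2\sqrt{ab}\,(\log b-\log a)$ agree exactly with the integrals and the L'H\^opital computation in the paper's proof.
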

\begin{proof}
Letting $f\left( t \right)={{e}^{t}}$ in Theorem \ref{3}. A simple calculation reveals that
\[\int\limits_{0}^{1}{{{e}^{a{{\nabla }_{\nu \lambda }}b}}d\lambda }=\frac{{{e}^{a{{\nabla }_{\nu }}b}}-e^a}{\nu \left( b-a \right)},\quad\int\limits_{0}^{1}{{{e}^{b{{\nabla }_{\left( 1-\nu  \right)\lambda }}a}}d\lambda }=\frac{{{e}^{b}}-{{e}^{a{{\nabla }_{\nu }}b}}}{\left( 1-\nu  \right)\left( b-a \right)},\]
and
\[\int\limits_{0}^{1}{{{e}^{a{{\nabla }_{\frac{1+\lambda \left( 2\nu -1 \right)}{2}}}b}}d\lambda }=\frac{2\left({{e}^{a\nabla b}}-{{e}^{a{{\nabla }_{\nu }}b}}\right)}{\left( 1-2\nu  \right)\left( b-a \right)}.\]
Hence we have
{\small
\[r\left( \frac{{{e}^{a{{\nabla }_{\nu }}b}}-e^a}{\nu \left( b-a \right)}+\frac{{{e}^{b}}-{{e}^{a{{\nabla }_{\nu }}b}}}{\left( 1-\nu  \right)\left( b-a \right)}-\frac{4\left({{e}^{a\nabla b}}-{{e}^{a{{\nabla }_{\nu }}b}}\right)}{\left( 1-2\nu  \right)\left( b-a \right)} \right)\le \left( \frac{{{e}^{a{{\nabla }_{\nu }}b}}-e^a}{\nu \left( b-a \right)} \right){{\nabla }_{\nu }}\left( \frac{{{e}^{b}}-{{e}^{a{{\nabla }_{\nu }}b}}}{\left( 1-\nu  \right)\left( b-a \right)} \right)-{{e}^{a{{\nabla }_{\nu }}b}},\]
}
and
{\small
\[\left( \frac{{{e}^{a{{\nabla }_{\nu }}b}}-e^a}{\nu \left( b-a \right)} \right){{\nabla }_{\nu }}\left( \frac{{{e}^{b}}-{{e}^{a{{\nabla }_{\nu }}b}}}{\left( 1-\nu  \right)\left( b-a \right)} \right)-{{e}^{a{{\nabla }_{\nu }}b}}\le R\left( \frac{{{e}^{a{{\nabla }_{\nu }}b}}-e^a}{\nu \left( b-a \right)}+\frac{{{e}^{b}}-{{e}^{a{{\nabla }_{\nu }}b}}}{\left( 1-\nu  \right)\left( b-a \right)}-\frac{4\left({{e}^{a\nabla b}}-{{e}^{a{{\nabla }_{\nu }}b}}\right)}{\left( 1-2\nu  \right)\left( b-a \right)} \right).\]
}
We obtain desired inequalities by replacing $e^a$ and $e^b$ by $a$ and $b$ in the above two inequalities.

Finally, we quickly find that
$$
\lim_{\nu\to 1/2}\frac{4\left(\sqrt{ab}-a^{1-\nu}b^{\nu}\right)}{1-2\nu}
=\lim_{\nu\to 1/2}\frac{4a^{1-\nu}b^{\nu}\left(\log a-\log b\right)}{-2}
=2\sqrt{ab}\left(\log b-\log a\right),
$$
which implies the last statement by simple calculations.
\end{proof}

\begin{corollary}\label{4}
Let $f : [a, b] \to \mathbb R$ be a convex function. Then for any $0 \le \nu  \le 1$,
\[f\left( a{{\nabla }_{\nu }}b \right)\le f\left( a \right){{\nabla }_{\nu }}f\left( b \right)-2r\int\limits_{0}^{1}{\left( \left( f\left( a{{\nabla }_{\nu \lambda }}b \right)\nabla f\left( b{{\nabla }_{\left( 1-\nu  \right)\lambda }}a \right) \right)-f\left( a{{\nabla }_{\frac{1+\lambda \left( 2\nu -1 \right)}{2}}}b \right) \right)d\lambda },\]
where $r=\min \left\{ \nu ,1-\nu  \right\}$.
\end{corollary}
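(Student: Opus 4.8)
The plan is to derive this inequality directly by concatenating the first inequality of Theorem \ref{3} with the right-hand side of the generalized Hermite--Hadamard inequality \eqref{Pal_gen_HHI}; no fresh analytic input is required. Recall that Theorem \ref{3} furnishes
\[
2r\int\limits_{0}^{1}{\left( \left( f\left( a{{\nabla }_{\nu \lambda }}b \right)\nabla f\left( b{{\nabla }_{\left( 1-\nu  \right)\lambda }}a \right) \right)-f\left( a{{\nabla }_{\frac{1+\lambda \left( 2\nu -1 \right)}{2}}}b \right) \right)d\lambda }\le {{\mathfrak C }_{f,\nu }}\left( a,b \right)-f\left( a{{\nabla }_{\nu }}b \right).
\]

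First I would transpose the term $f(a\nabla_\nu b)$ to the left and the integral to the right, obtaining $f(a\nabla_\nu b)\le \mathfrak C _{f,\nu}(a,b)-2r\int_0^1(\cdots)\,d\lambda$. Then I would invoke the bound $\mathfrak C _{f,\nu}(a,b)\le f(a)\nabla_\nu f(b)$, which is exactly the right-hand inequality of \eqref{Pal_gen_HHI} and is valid for every convex $f$. Replacing $\mathfrak C _{f,\nu}(a,b)$ by the larger quantity $f(a)\nabla_\nu f(b)$ enlarges the right-hand side, so the inequality for $f(a\nabla_\nu b)$ is preserved and becomes precisely the asserted statement.

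There is essentially no obstacle to overcome: the argument is a two-step chaining of results already established in the excerpt. The only point meriting care is the direction of the replacement---since $\mathfrak C _{f,\nu}(a,b)$ is being bounded above, substituting the larger $f(a)\nabla_\nu f(b)$ keeps the inequality valid while leaving the subtracted integral (nonnegative by Remark \ref{remark_2.12_for_alpha}) intact. Read this way, the corollary sharpens the outer bound $f(a\nabla_\nu b)\le f(a)\nabla_\nu f(b)$ of \eqref{Pal_gen_HHI} by the explicit nonnegative deficit $2r\int_0^1(\cdots)\,d\lambda$.
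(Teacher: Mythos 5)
Your proposal is correct and is essentially identical to the paper's own proof: both rearrange the first inequality of Theorem \ref{3} and then replace ${\mathfrak C}_{f,\nu}(a,b)$ by the upper bound $f(a)\nabla_{\nu}f(b)$ from the second inequality of \eqref{Pal_gen_HHI}. No gap; nothing further is needed.
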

\begin{proof}
By the second inequality in \eqref{Pal_gen_HHI}, we understand that
\[{\mathfrak C _{f,\nu }}\left( a,b \right)\le f\left( a \right){{\nabla }_{\nu }}f\left( b \right).\]
Combining this with Theorem \ref{3} finishes the proof.
\end{proof}

The following result improves the second inequality in \eqref{7}.
\begin{corollary}
Let $f : [a, b] \to \mathbb R$ be a convex function. Then
{\small
\[\begin{aligned}
  & \int\limits_{0}^{1}{f\left( a{{\nabla }_{\nu }}b \right)d\nu } \\ 
 & \le f\left( a \right)\nabla f\left( b \right)-\int\limits_{0}^{1}{\left( \left( 1-\left| 2\nu -1 \right| \right)\int\limits_{0}^{1}{\left( \left( f\left( a{{\nabla }_{\nu \lambda }}b \right)\nabla f\left( b{{\nabla }_{\left( 1-\nu  \right)\lambda }}a \right) \right)-f\left( a{{\nabla }_{\frac{1+\lambda \left( 2\nu -1 \right)}{2}}}b \right) \right)d\lambda } \right)d\nu }. 
\end{aligned}\]
}
\end{corollary}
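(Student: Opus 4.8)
The plan is simply to integrate the inequality of Corollary~\ref{4} in the parameter $\nu$ over $[0,1]$ and then simplify the resulting right-hand side. Integrating both sides of Corollary~\ref{4} against $d\nu$, the left-hand side becomes $\int_0^1 f(a\nabla_\nu b)\,d\nu$, which (after renaming the dummy variable) is exactly the quantity $\int_0^1 f(a\nabla_\lambda b)\,d\lambda$ appearing in the Hermite-Hadamard inequality \eqref{7}. It then remains to handle the two terms on the right.

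First I would compute the integral of the leading term. Since $f(a)\nabla_\nu f(b)=(1-\nu)f(a)+\nu f(b)$ is affine in $\nu$, one has
\[
\int_0^1 f(a)\nabla_\nu f(b)\,d\nu=\Bigl(\int_0^1(1-\nu)\,d\nu\Bigr)f(a)+\Bigl(\int_0^1\nu\,d\nu\Bigr)f(b)=\tfrac12 f(a)+\tfrac12 f(b)=f(a)\nabla f(b),
\]
which produces exactly the first term on the right-hand side of the assertion.

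The key remaining step is to identify the coefficient. In Corollary~\ref{4} the factor multiplying the inner integral is $2r=2\min\{\nu,1-\nu\}$, and a short case distinction shows that $2\min\{\nu,1-\nu\}=1-|2\nu-1|$ for $0\le\nu\le1$: when $\nu\le 1/2$ both sides equal $2\nu$, while when $\nu\ge 1/2$ both sides equal $2(1-\nu)$. Substituting this identity, the subtracted term in the integrated inequality becomes precisely
\[
\int_0^1\Bigl((1-|2\nu-1|)\int_0^1\bigl((f(a\nabla_{\nu\lambda}b)\nabla f(b\nabla_{(1-\nu)\lambda}a))-f(a\nabla_{\frac{1+\lambda(2\nu-1)}{2}}b)\bigr)\,d\lambda\Bigr)\,d\nu,
\]
which is the claimed inequality. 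There is no genuine obstacle beyond these elementary computations; the only point worth noting is that, by Remark~\ref{remark_2.12_for_alpha}, the inner integrand is nonnegative, so the subtracted quantity is nonnegative and the corollary indeed strengthens the upper Hermite-Hadamard bound $\int_0^1 f(a\nabla_\nu b)\,d\nu\le f(a)\nabla f(b)$.
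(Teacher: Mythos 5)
Your proof is correct and follows essentially the same route as the paper's: both rewrite the coefficient via $2\min\{\nu,1-\nu\}=1-\left|2\nu-1\right|$ and then integrate Corollary \ref{4} over $\nu\in[0,1]$. The only difference is that you spell out the elementary computation $\int_0^1 f(a)\nabla_\nu f(b)\,d\nu=f(a)\nabla f(b)$, which the paper leaves implicit.
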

\begin{proof}
Since $2\min \left\{ \nu ,1-\nu  \right\}=1-\left| 2\nu -1 \right|$, we infer from Corollary \ref{4} that
\[\begin{aligned}
  & f\left( a{{\nabla }_{\nu }}b \right) \\ 
 & \le f\left( a \right){{\nabla }_{\nu }}f\left( b \right)-\left( 1-\left| 2\nu -1 \right| \right)\int\limits_{0}^{1}{\left( \left( f\left( a{{\nabla }_{\nu \lambda }}b \right)\nabla f\left( b{{\nabla }_{\left( 1-\nu  \right)\lambda }}a \right) \right)-f\left( a{{\nabla }_{\frac{1+\lambda \left( 2\nu -1 \right)}{2}}}b \right) \right)d\lambda }.  
\end{aligned}\]
We obtain the desired result if we take integral over $\nu \in \left[ 0,1 \right]$.
\end{proof}

\begin{remark}
The case $\nu ={1}/{2}\;$ in Corollary \ref{4}, recovers the second inequality of \eqref{7}.
Indeed,
\[\begin{aligned}
   f\left( a\nabla b \right)&\le f\left( a \right)\nabla f\left( b \right)-\left( \int\limits_{0}^{1}{\left( f\left( a{{\nabla }_{\frac{\lambda }{2}}}b \right)\nabla f\left( b{{\nabla }_{\frac{\lambda }{2}}}a \right)-f\left( a\nabla b \right) \right)d\lambda } \right) \\ 
 & =f\left( a \right)\nabla f\left( b \right)-\int\limits_{0}^{1}{\left( f\left( a{{\nabla }_{\frac{\lambda }{2}}}b \right)\nabla f\left( b{{\nabla }_{\frac{\lambda }{2}}}a \right) \right)d\lambda }+f\left( a\nabla b \right) \\ 
 & =f\left( a \right)\nabla f\left( b \right)-\int\limits_{0}^{1}{\left( f\left( a{{\nabla }_{\frac{\lambda }{2}}}b \right)\nabla f\left( a{{\nabla }_{1-\frac{\lambda }{2}}}b \right) \right)d\lambda }+f\left( a\nabla b \right).  
\end{aligned}\]
Equalities 
$\int_0^1f\left(a\nabla_{\frac{\lambda}{2}}b\right)d\lambda=2\int_0^{1/2}f\left(a\nabla_x b\right)dx$ and $\int_0^1f\left(a\nabla_{1-\frac{\lambda}{2}}b\right)d\lambda=2\int_{1/2}^1f\left(a\nabla_x b\right)dx$ imply
$$\int\limits_{0}^{1}{\left( f\left( a{{\nabla }_{\frac{\lambda }{2}}}b \right)\nabla f\left( a{{\nabla }_{1-\frac{\lambda }{2}}}b \right) \right)d\lambda }=\int\limits_0^1f\left(a\nabla_{\lambda}b\right)d\lambda.$$
Thus we have
$$
\int\limits_0^1f\left(a\nabla_{\lambda}b\right)d\lambda\le  f\left( a \right)\nabla f\left( b \right).
$$
\end{remark}

The following result gives a refinement of the second inequality in \eqref{Pal_gen_HHI}.
\begin{theorem}\label{11}
Let $f : [a, b] \to \mathbb R$ be a convex function. Then for any $0 \le \nu  \le 1$,
\[\begin{aligned}
   2\widetilde{r}(\nu)\left( f\left( a \right)\nabla f\left( b \right)-f\left( a\nabla b \right) \right)&\le f\left( a \right){{\nabla }_{\nu }}f\left( b \right)-{{\mathfrak C}_{f,\nu }}\left( a,b \right) \\ 
 & \le 2\widetilde{R}(\nu)\left( f\left( a \right)\nabla f\left( b \right)-f\left( a\nabla b \right) \right)  
\end{aligned}\]
where 
$$\widetilde{r}(\nu):=\int\limits_{0}^{1}{\left( {{r}_{1}}{{\nabla }_{\nu }}{{r}_{2}} \right)d\lambda },\quad \widetilde{R}(\nu):=\int\limits_{0}^{1}{\left( {{R}_{1}}{{\nabla }_{\nu }}{{R}_{2}} \right)d\lambda },$$
$r_1=\min \left\{ \nu\lambda,1-\nu\lambda \right\}$, $r_2=\min \left\{ (1-\nu)\lambda,1-(1-\nu)\lambda \right\}$, $R_1=\max \left\{ \nu\lambda,1-\nu\lambda \right\}$, and $R_2=\max \left\{ (1-\nu)\lambda,1-(1-\nu)\lambda \right\}$.
\end{theorem}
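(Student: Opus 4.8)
The plan is to write the gap $f(a)\nabla_\nu f(b) - \mathfrak C_{f,\nu}(a,b)$ as a single integral of pointwise ``convexity deficits'' and then bound each deficit by the two-sided estimates \eqref{2} and \eqref{5}. The starting point is the expanded form of $\mathfrak C_{f,\nu}$ coming from \eqref{sec1_eq02.1.0}, namely
$$\mathfrak C_{f,\nu}(a,b) = (1-\nu)\int_0^1 f(a\nabla_{\nu\lambda}b)\,d\lambda + \nu\int_0^1 f(b\nabla_{(1-\nu)\lambda}a)\,d\lambda.$$
The crucial preliminary step is an identity at the right endpoint: a direct computation of the elementary integrals $\int_0^1(1-\nu\lambda)\,d\lambda$, $\int_0^1\nu\lambda\,d\lambda$ and their counterparts shows that the weights recombine to give
$$f(a)\nabla_\nu f(b) = (1-\nu)\int_0^1 f(a)\nabla_{\nu\lambda}f(b)\,d\lambda + \nu\int_0^1 f(b)\nabla_{(1-\nu)\lambda}f(a)\,d\lambda.$$
Subtracting the two displays term by term yields
$$f(a)\nabla_\nu f(b)-\mathfrak C_{f,\nu}(a,b) = (1-\nu)\int_0^1 \big(f(a)\nabla_{\nu\lambda}f(b)-f(a\nabla_{\nu\lambda}b)\big)\,d\lambda + \nu\int_0^1 \big(f(b)\nabla_{(1-\nu)\lambda}f(a)-f(b\nabla_{(1-\nu)\lambda}a)\big)\,d\lambda,$$
so the problem is reduced to estimating the two integrands.

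Next I would apply \eqref{2} and \eqref{5} pointwise in $\lambda$. For the first integrand I take $x=a$, $y=b$, $t=\nu\lambda$ in \eqref{2}, whose associated minimum is exactly $r_1=\min\{\nu\lambda,1-\nu\lambda\}$; for the second integrand I take $x=b$, $y=a$, $t=(1-\nu)\lambda$, with minimum $r_2=\min\{(1-\nu)\lambda,1-(1-\nu)\lambda\}$, using the symmetry $f(b)\nabla f(a)-f(b\nabla a)=f(a)\nabla f(b)-f(a\nabla b)$ to unify the right-hand deficit. This gives
$$f(a)\nabla_{\nu\lambda}f(b)-f(a\nabla_{\nu\lambda}b)\ge 2r_1\big(f(a)\nabla f(b)-f(a\nabla b)\big),$$
$$f(b)\nabla_{(1-\nu)\lambda}f(a)-f(b\nabla_{(1-\nu)\lambda}a)\ge 2r_2\big(f(a)\nabla f(b)-f(a\nabla b)\big).$$
Substituting these into the integral, pulling out the constant deficit $f(a)\nabla f(b)-f(a\nabla b)$, and recognising $\int_0^1\big((1-\nu)r_1+\nu r_2\big)\,d\lambda=\int_0^1(r_1\nabla_\nu r_2)\,d\lambda=\widetilde r(\nu)$ produces the lower bound. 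The upper bound is entirely parallel: replacing \eqref{2} by \eqref{5} swaps $r_1,r_2$ for $R_1,R_2$ and $\widetilde r(\nu)$ for $\widetilde R(\nu)$.

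I expect the only genuinely non-routine step to be verifying the endpoint identity for $f(a)\nabla_\nu f(b)$; everything downstream is a mechanical substitution. That verification is a short bookkeeping computation in which one checks that the coefficient of $f(a)$ collapses to $1-\nu$ and that of $f(b)$ to $\nu$. Care is also needed to match the orientation of the second term: because $\nabla=\nabla_{1/2}$ is symmetric, the ``anchor'' deficit $f(a)\nabla f(b)-f(a\nabla b)$ is the same in both integrals and can be factored out cleanly, which is what makes $\widetilde r(\nu)$ and $\widetilde R(\nu)$ emerge in the stated closed form.
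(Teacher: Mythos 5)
Your proposal is correct and follows essentially the same route as the paper: both rest on the identity $\left(f(a)\nabla_{\nu\lambda}f(b)\right)\nabla_{\nu}\left(f(b)\nabla_{(1-\nu)\lambda}f(a)\right)=f(a)\nabla_{\nu}f(b)$ (you use its integrated form), then apply Dragomir's bounds \eqref{2} and \eqref{5} with $t=\nu\lambda$ and $t=(1-\nu)\lambda$, and integrate over $\lambda$ to produce $\widetilde{r}(\nu)$ and $\widetilde{R}(\nu)$. The only cosmetic difference is the order of operations (you integrate the identity before applying the pointwise bounds, the paper integrates last), and your explicit appeal to the symmetry $f(b)\nabla f(a)-f(b\nabla a)=f(a)\nabla f(b)-f(a\nabla b)$ is in fact a point where your write-up is slightly cleaner than the paper's \eqref{14}.
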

\begin{proof}
If we take $\alpha=f(a)$ and $\beta=f(b)$, in the equality $\left(\alpha\nabla_{\nu \lambda }\beta\right)\nabla_\nu \left(\beta\nabla_{(1-\nu )\lambda }\alpha\right)=\alpha\nabla_\nu \beta$, we deduce $\left(f(a)\nabla_{\nu \lambda }f(b)\right)\nabla_\nu \left(f(b)\nabla_{(1-\nu )\lambda }f(a)\right)=f(a)\nabla_\nu f(b)$, which implies the following:
\begin{equation}\label{12}
\begin{aligned}
   & f(a)\nabla_\nu f(b)-f\left( a{{\nabla }_{\nu \lambda }}b \right){{\nabla }_{\nu}}f\left( b{{\nabla }_{\left( 1-\nu  \right)\lambda }}a \right)\\
   &=\left(f(a)\nabla_{\nu \lambda }f(b)\right)\nabla_\nu \left(f(b)\nabla_{(1-\nu )\lambda }f(a)\right)-f\left( a{{\nabla }_{\nu \lambda }}b \right){{\nabla }_{\nu}}f\left( b{{\nabla }_{\left( 1-\nu  \right)\lambda }}a \right) \\ 
 &=\left(f(a)\nabla_{\nu \lambda }f(b)-f\left( a{{\nabla }_{\nu \lambda }}b \right)\right)\nabla_\nu \left(f(b)\nabla_{(1-\nu )\lambda }f(a)-f\left( b{{\nabla }_{\left( 1-\nu  \right)\lambda }}a \right)\right).  
\end{aligned}
\end{equation}
If we replace $t$ by $\nu\lambda$ in \eqref{2} and \eqref{5}, then we deduce
\begin{equation}\label{13}
2r_1\left( f\left( a  \right)\nabla f\left( b \right)-f\left( a \nabla b \right) \right)
\le f\left( a  \right){{\nabla }_{\nu\lambda}}f\left( b \right)-f\left( a {{\nabla }_{\nu\lambda}}b \right)\le 2R_1\left( f\left( a  \right)\nabla f\left( b \right)-f\left( a \nabla b \right) \right)
\end{equation}
where $r_1=\min \left\{ \nu\lambda,1-\nu\lambda \right\}$ and $R_1=\max \left\{ \nu\lambda,1-\nu\lambda \right\}$. In the same manner, if we replace $t$ by $(1-\nu)\lambda$ in \eqref{2} and \eqref{5}, then we obtain
\begin{equation}\label{14}
2r_2\left( f\left( a  \right)\nabla f\left( b \right)-f\left( a \nabla b \right) \right)
\le f\left( a  \right){{\nabla }_{(1-\nu)\lambda}}f\left( b \right)-f\left( a {{\nabla }_{(1-\nu)\lambda}}b \right)\le 2R_2\left( f\left( a  \right)\nabla f\left( b \right)-f\left( a \nabla b \right) \right)
\end{equation}
where $r_2=\min \left\{ (1-\nu)\lambda,1-(1-\nu)\lambda \right\}$ and $R_2=\max \left\{ (1-\nu)\lambda,1-(1-\nu)\lambda \right\}$.
Using equality \eqref{12} and inequalities \eqref{13} and \eqref{14}, we find the following inequality
\[\begin{aligned}
 & 2(r_1\nabla_{\nu}r_2)\left( f\left( a  \right)\nabla f\left( b \right)-f\left( a \nabla b \right) \right)\\
&\le \left(f(a)\nabla_{\nu \lambda }f(b)-f\left( a{{\nabla }_{\nu \lambda }}b \right)\right)\nabla_\nu \left(f(b)\nabla_{(1-\nu )\lambda }f(a)-f\left( b{{\nabla }_{\left( 1-\nu  \right)\lambda }}a \right)\right)\\
&\le 2(R_1\nabla_{\nu}R_2)\left( f\left( a  \right)\nabla f\left( b \right)-f\left( a \nabla b \right) \right).
\end{aligned}\]
Therefore, we obtain 
\[\begin{aligned}
& 2(r_1\nabla_{\nu}r_2)\left( f\left( a  \right)\nabla f\left( b \right)-f\left( a \nabla b \right) \right)\\
&\le f(a)\nabla_\nu f(b)-f\left( a{{\nabla }_{\nu \lambda }}b \right){{\nabla }_{\nu}}f\left( b{{\nabla }_{\left( 1-\nu  \right)\lambda }}a \right)\\
&\le 2(R_1\nabla_{\nu}R_2)\left( f\left( a  \right)\nabla f\left( b \right)-f\left( a \nabla b \right) \right).
\end{aligned}\]
By taking integral over $\lambda  \in \left[ 0,1 \right]$, we deduce the inequalities of the statement.
\end{proof}

Finding a maximum value of $\widetilde{r}(\nu)$ and a minimum value of $\widetilde{R}(\nu)$ for $0<\nu <1$, we state the following corollary.

\begin{corollary}\label{corollary_max_min}
Let $f : [a, b] \to \mathbb R$ be a convex function. Then for any $0 < \nu  < 1$,
$$
   \frac{1}{2}\left( f\left( a \right)\nabla f\left( b \right)-f\left( a\nabla b \right) \right)\le f\left( a \right){{\nabla }_{\nu }}f\left( b \right)-{{\mathfrak C}_{f,\nu }}\left( a,b \right) 
 \le \frac{3}{2}\left( f\left( a \right)\nabla f\left( b \right)-f\left( a\nabla b \right) \right).  
$$
\end{corollary}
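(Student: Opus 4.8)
The plan is to deduce Corollary~\ref{corollary_max_min} from Theorem~\ref{11} by replacing the $\nu$-dependent coefficients $2\widetilde r(\nu)$ and $2\widetilde R(\nu)$ with explicit numerical constants. Because $f(a)\nabla f(b)-f(a\nabla b)\ge 0$ for convex $f$, the two one-sided estimates in Theorem~\ref{11} are independent of each other, so the whole task reduces to analyzing the two averaged weights $\widetilde r(\nu)=\int_0^1(r_1\nabla_\nu r_2)\,d\lambda$ and $\widetilde R(\nu)=\int_0^1(R_1\nabla_\nu R_2)\,d\lambda$ as functions of $\nu\in(0,1)$ and locating their extreme values.

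The first simplification I would record is the elementary identity $\max\{x,1-x\}=1-\min\{x,1-x\}$, which gives $R_1=1-r_1$ and $R_2=1-r_2$, hence $R_1\nabla_\nu R_2=1-(r_1\nabla_\nu r_2)$ and therefore $\widetilde R(\nu)=1-\widetilde r(\nu)$. This collapses the problem to a single function $\widetilde r(\nu)$. To evaluate it I would split $[0,1]$ at the breakpoints of the two minima, namely $\lambda=1/(2\nu)$ (where $\nu\lambda$ crosses $1/2$) and $\lambda=1/(2(1-\nu))$ (where $(1-\nu)\lambda$ crosses $1/2$). Exactly one of these lies in the open interval, depending on whether $\nu\le 1/2$ or $\nu\ge 1/2$, and the two regimes are exchanged by $\nu\mapsto 1-\nu$; so $\widetilde r$ is symmetric about $\nu=1/2$ and it suffices to treat $\nu\le 1/2$. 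Carrying out the resulting piecewise integration yields a closed form, namely $\widetilde r(\nu)=\tfrac{\nu(3-4\nu)}{4(1-\nu)}$ for $\nu\le 1/2$ (with the mirrored expression for $\nu\ge 1/2$), and correspondingly $\widetilde R(\nu)=1-\widetilde r(\nu)$.

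It then remains to optimize the closed form. Differentiating the expression for $\nu\le 1/2$, the sign of $\widetilde r{\,}'$ is governed by $(2\nu-1)(2\nu-3)$, which is nonnegative there, so $\widetilde r$ increases on $[0,1/2]$; by the $\nu\leftrightarrow 1-\nu$ symmetry it decreases on $[1/2,1]$, so $\nu=1/2$ is the unique interior extremum, with $\widetilde r(1/2)=\tfrac14$ and hence $\widetilde R(1/2)=\tfrac34$. Feeding $2\widetilde r=\tfrac12$ and $2\widetilde R=\tfrac32$ into Theorem~\ref{11} produces the constants $\tfrac12$ and $\tfrac32$ appearing in the statement. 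I expect the delicate point to be precisely this final substitution: the explicit formula shows that $\nu=1/2$ is a \emph{maximum} of $\widetilde r$ and a \emph{minimum} of $\widetilde R$, so $\tfrac12$ and $\tfrac32$ are the extremal values $2\widetilde r(1/2)$ and $2\widetilde R(1/2)$ realized at the center. The main obstacle is therefore not the integration itself but verifying that the extremal replacement is consistent with the direction of each inequality in Theorem~\ref{11}; I would confirm this against the explicit formula for $\widetilde r(\nu)$ before asserting the constants, since the sense of the extremum (and the behavior of $\widetilde r,\widetilde R$ away from $\nu=1/2$) is exactly what controls whether the two constants may be used uniformly in $\nu$.
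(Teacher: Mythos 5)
Your computations are all correct, and in fact cleaner than the paper's own: the closed form $\widetilde r(\nu)=\frac{\nu(3-4\nu)}{4(1-\nu)}$ on $(0,1/2]$, the identity $\widetilde R(\nu)=1-\widetilde r(\nu)$ (which the paper never observes; it integrates the four pieces separately and carries absolute-value expressions), and the conclusion that $\nu=1/2$ is the unique interior extremum with $\widetilde r(1/2)=\tfrac14$ and $\widetilde R(1/2)=\tfrac34$ all agree with the paper's formulas. But the step you yourself flagged as delicate and then deferred --- ``feeding $2\widetilde r(1/2)=\tfrac12$ and $2\widetilde R(1/2)=\tfrac32$ into Theorem \ref{11}'' --- is exactly where the argument breaks, and your own closed form shows it cannot be repaired. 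To deduce the uniform lower bound $\tfrac12\,(f(a)\nabla f(b)-f(a\nabla b))\le f(a)\nabla_\nu f(b)-{\mathfrak C}_{f,\nu}(a,b)$ from Theorem \ref{11} you would need $2\widetilde r(\nu)\ge\tfrac12$ for \emph{every} $\nu$, i.e.\ $\nu=1/2$ would have to be the \emph{minimum} of $\widetilde r$; it is the maximum, and $\widetilde r(\nu)\to0$ as $\nu\to0^{+}$. Dually, the upper bound would require $\widetilde R(\nu)\le\tfrac34$ for all $\nu$, whereas $\tfrac34$ is the minimum of $\widetilde R$ and $\widetilde R(\nu)\to1$ at the endpoints. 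The extremal values at $\nu=1/2$ bound the \emph{coefficients} of Theorem \ref{11}; they do not bound the middle quantity, so the substitution runs in the wrong direction on both sides.

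This gap is not a fixable defect of your write-up: the corollary as stated is false away from $\nu=1/2$. Take $f(t)=t^{2}$, $a=0$, $b=1$; then ${\mathfrak C}_{f,\nu}(0,1)=\frac{\nu(1+2\nu)}{3}$, so the middle quantity equals $\frac{2\nu(1-\nu)}{3}$, while $\tfrac12\,(f(0)\nabla f(1)-f(1/2))=\tfrac{1}{8}$; at $\nu=0.1$ the middle quantity is $0.06<\tfrac{1}{8}$, so the left inequality fails. A convex piecewise-linear $f$ with its kink near $b$, e.g.\ $f(t)=\max\{0,100(t-0.99)\}$ on $[0,1]$ with $\nu=0.9$, gives middle quantity $0.855$ against $\tfrac32\,(f(0)\nabla f(1)-f(1/2))=0.75$, so the right inequality fails as well (Theorem \ref{11} itself remains consistent in both examples). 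For what it is worth, the paper's own proof commits precisely the error you were worried about: it computes $\max_{\nu}\widetilde r(\nu)=\tfrac14$ and $\min_{\nu}\widetilde R(\nu)=\tfrac34$ and then asserts the corollary, a non sequitur. What those computations actually establish is the reverse comparison $2\widetilde r(\nu)\le\tfrac12$ and $\tfrac32\le 2\widetilde R(\nu)$, i.e.\ that the two-sided bound of Theorem \ref{11} at arbitrary $\nu$ is never tighter than the Bullen-type bound it produces at $\nu=1/2$. Your instinct to verify the sense of the extremum before asserting uniform constants was exactly right; carrying out that check shows that both the deduction and the stated corollary collapse.
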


\begin{proof}
To calculate the constants $\widetilde{r}(\nu)$ and $\widetilde{R}(\nu)$ appeared in Theorem \ref{11}, notice that
\[\int\limits_{0}^{1}{{{r}_{1}}d\lambda }=-\frac{\left( 2\nu -1 \right)\left| 2\nu -1 \right|-4\nu +1}{8\nu }\quad\text{ and }\quad\int\limits_{0}^{1}{{{r}_{2}}d\lambda }=-\frac{\left( 2\nu -1 \right)\left| 2\nu -1 \right|-4\nu +3}{8\left( \nu -1 \right)},\]
\[\int\limits_{0}^{1}{{{R}_{1}}d\lambda }=\frac{\left( 2\nu -1 \right)\left| 2\nu -1 \right|+4\nu +1}{8\nu }\quad\text{ and }\quad\int\limits_{0}^{1}{{{R}_{2}}d\lambda }=\frac{\left( 2\nu -1 \right)\left| 2\nu -1 \right|+4\nu -5}{8\left( \nu -1 \right)}.\]
Accordingly,
\[\widetilde{r}(\nu)=\frac{{{\left( 2\nu -1 \right)}^{2}}\left| 2\nu -1 \right|+6\nu \left( 1-\nu  \right)-1}{8\nu \left( 1-\nu  \right)},\]
and for $0< \nu < 1$
\[\widetilde{R}(\nu)=\frac{1+2\nu \left( 1-\nu  \right)-{{\left( 2\nu -1 \right)}^{2}}\left| 2\nu -1 \right|}{8\nu \left( 1-\nu  \right)}.\]
One can easily check that
	\[\frac{d\widetilde{r}\left( \nu  \right)}{d\nu}=-\frac{\left( 2\nu -1 \right)\left( \left( 2\nu \left( \nu -1 \right)-1 \right)\left| 2\nu -1 \right|+1 \right)}{8{{\nu }^{2}}{{\left( 1-\nu  \right)}^{2}}}\]
	and for $0< \nu < 1$
	$$
	\frac{\widetilde{r}\left(\nu \right)}{d\nu}=0 \Longleftrightarrow \nu=\frac12.
	$$
A direct computation shows that	
	\[\left\{ \begin{aligned}
  & \widetilde{r}'\left( \nu  \right)>0;\quad\text{ if }0<\nu < \frac{1}{2}, \\ 
 & \widetilde{r}'\left( \nu  \right)<0;\quad\text{ if }\frac{1}{2}< \nu <1. \\ 
\end{aligned} \right.\]
Notice that $\underset{0<\nu <1}{\mathop{\max }}\,\widetilde{r}\left( \nu  \right)={1}/{4}\;$, for $\nu ={1}/{2}\;$. Besides,
	\[\frac{\widetilde{R}\left( \nu  \right)}{d\nu}=\frac{\left( 2\nu -1 \right)\left( \left( 2\nu \left( \nu -1 \right)-1 \right)\left| 2\nu -1 \right|+1 \right)}{8{{\nu }^{2}}{{\left( 1-\nu  \right)}^{2}}}\]
	and
	$$
	\frac{\widetilde{R}\left( \nu  \right)}{d\nu}=0\Longleftrightarrow \nu=\frac12.
	$$
Direct calculations show that	
	\[\left\{ \begin{aligned}
  & \widetilde{R}'\left( \nu  \right)<0;\quad\text{ if }0<\nu < \frac{1}{2}, \\ 
 & \widetilde{R}'\left( \nu  \right)>0;\quad\text{ if }\frac{1}{2}< \nu <1. \\ 
\end{aligned} \right.\]
Notice that $\underset{0<\nu <1}{\mathop{\min }}\,\widetilde{R}\left( \nu  \right)={3}/{4}\;$, for $\nu ={1}/{2}\;$.
We thus have inequalities in this corollary.

\end{proof}

\begin{remark}
If we take $\nu ={1}/{2}\;$ in Corollary \ref{corollary_max_min}, then we have
$$
\frac{1}{2}\left( f\left( a  \right)\nabla f\left( b \right)-f\left( a \nabla b \right) \right)\le
f(a)\nabla f(b)-\int\limits_0^1f(a\nabla_{\lambda}b)d\lambda
\le \frac{3}{2}\left( f\left( a  \right)\nabla f\left( b \right)-f\left( a \nabla b \right) \right),
$$
which is equivalent to
$$
\frac{3}{2}f(a\nabla b)-\frac{1}{2}f(a)\nabla f(b)\le \int\limits_0^1f(a\nabla_{\lambda}b)d\lambda \le
\frac{1}{2}f(a\nabla b)+\frac{1}{2}f(a)\nabla f(b).
$$
The second inequality above represents Bullen's inequality (see, e.g., \cite{NP}, \cite{PPT}).
Naturally, the second inequality above improves the second inequality in \eqref{7}, since Theorem \ref{11} improves the second inequality of \eqref{Pal_gen_HHI} in a more general form.
\end{remark}

\begin{corollary}\label{corollary_identric}
Let $a,b>0$ and $0< \nu < 1$. Then
\begin{equation}\label{corollary_identric_eq01}
\widetilde{r}(\nu)\left( \sqrt{a}-\sqrt{b} \right)^2\le a{{\nabla }_{\nu }}b-{{L}_{\nu }}\left( a,b \right)\le \widetilde{R}(\nu)\left(\sqrt{a}-\sqrt{b}  \right)^2
\end{equation}
and
\begin{equation}\label{corollary_identric_eq02}
K{{\left( a,b \right)}^{\widetilde{r}(\nu)}}a{{\sharp}_{\nu }}b\le {{I}_{\nu }}\left( a,b \right)\le K{{\left( a,b \right)}^{\widetilde{R}(\nu)}}a{{\sharp}_{\nu }}b,
\end{equation}
where $\tilde{r}(\nu)$, $\tilde{R}(\nu)$, $r_1$, $r_2$, $R_1$, and $R_2$ are defined as in Theorem \ref{11}.
Also $K(a,b):=\dfrac{(a+b)^2}{4ab}$ is the Kantorovich constant and 
$$I_{\nu}(a,b):=\frac{1}{e}\left(a\nabla_\nu b\right)^{\frac{(1-2\nu)\left(a\nabla_{\nu}b\right)}{\nu(1-\nu)(b-a)}}\left(\frac{b^{\frac{\nu b}{1-\nu}}}{a^{\frac{(1-\nu)a}{\nu}}}\right)^{\frac{1}{b-a}}$$
 is the weighted identric mean introduced in \cite{PSMA2016}.
\end{corollary}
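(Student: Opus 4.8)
The plan is to obtain both chains of inequalities as direct specializations of Theorem \ref{11}, choosing in each case a convex test function $f$ for which the middle quantity $f(a)\nabla_\nu f(b)-\mathfrak{C}_{f,\nu}(a,b)$ reproduces, after relabeling, the gap appearing in the statement. For \eqref{corollary_identric_eq01} I would take $f(t)=e^t$, and for \eqref{corollary_identric_eq02} the convex function $f(t)=-\log t$. In both cases the factor $2$ on the outer bounds of Theorem \ref{11} will cancel against a factor $\tfrac12$ produced by the ground term $f(a)\nabla f(b)-f(a\nabla b)$.

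For \eqref{corollary_identric_eq01}, apply Theorem \ref{11} with $f(t)=e^t$. A computation from \eqref{sec1_eq02.1.0}, identical to the one carried out for $f=e^t$ in the proof of the preceding corollary via Theorem \ref{3}, gives
$$\mathfrak{C}_{e^t,\nu}(a,b)=\frac{1}{b-a}\left(\frac{1-\nu}{\nu}\left(e^{a\nabla_\nu b}-e^a\right)+\frac{\nu}{1-\nu}\left(e^b-e^{a\nabla_\nu b}\right)\right),$$
while $f(a)\nabla_\nu f(b)=(1-\nu)e^a+\nu e^b$ and $f(a)\nabla f(b)-f(a\nabla b)=\tfrac{e^a+e^b}{2}-e^{a\nabla b}$. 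Replacing $e^a$ by $a$ and $e^b$ by $b$ (the substitution already used in the previous corollary) turns $\mathfrak{C}_{e^t,\nu}(a,b)$ into $L_\nu(a,b)$ by \eqref{8}, turns $f(a)\nabla_\nu f(b)$ into $a\nabla_\nu b$, and turns the ground term into $\tfrac{a+b}{2}-\sqrt{ab}=\tfrac12(\sqrt a-\sqrt b)^2$. Hence the middle term of Theorem \ref{11} becomes $a\nabla_\nu b-L_\nu(a,b)$ and, after the factors $2$ and $\tfrac12$ cancel, this is exactly \eqref{corollary_identric_eq01}.

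For \eqref{corollary_identric_eq02}, apply Theorem \ref{11} with $f(t)=-\log t$. Then $f(a)\nabla_\nu f(b)=-\log(a\sharp_\nu b)$ and $f(a)\nabla f(b)-f(a\nabla b)=\log\tfrac{a+b}{2\sqrt{ab}}=\tfrac12\log K(a,b)$, since $K(a,b)=\tfrac{(a+b)^2}{4ab}$. The crucial step is to identify $\mathfrak{C}_{-\log,\nu}(a,b)=-\log I_\nu(a,b)$: evaluating the two integrals in \eqref{sec1_eq02.1.0} through $\int\log u\,du=u\log u-u$ over the intervals $[a,a\nabla_\nu b]$ and $[a\nabla_\nu b,b]$, and regrouping, yields
$$\mathfrak{C}_{\log,\nu}(a,b)=-1+\frac{(1-2\nu)(a\nabla_\nu b)\log(a\nabla_\nu b)}{\nu(1-\nu)(b-a)}+\frac{1}{b-a}\left(\frac{\nu b\log b}{1-\nu}-\frac{(1-\nu)a\log a}{\nu}\right),$$
which is precisely $\log I_\nu(a,b)$ for the weighted identric mean $I_\nu$ in the statement. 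Consequently the middle term equals $\log\tfrac{I_\nu(a,b)}{a\sharp_\nu b}$, and Theorem \ref{11} reads $\widetilde r(\nu)\log K(a,b)\le\log\tfrac{I_\nu(a,b)}{a\sharp_\nu b}\le\widetilde R(\nu)\log K(a,b)$; exponentiating (using monotonicity of $\exp$) and multiplying by $a\sharp_\nu b$ gives \eqref{corollary_identric_eq02}.

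The only genuinely delicate point is the identric-mean identity $\mathfrak{C}_{\log,\nu}(a,b)=\log I_\nu(a,b)$: one must integrate and regroup carefully so that the $c\log c$ terms with $c=a\nabla_\nu b$ combine with coefficient $\tfrac{1-\nu}{\nu}-\tfrac{\nu}{1-\nu}=\tfrac{1-2\nu}{\nu(1-\nu)}$ and the constant $-1$ emerges from $\tfrac{1}{b-a}\left(\nu(b-a)+(1-\nu)(b-a)\right)$. Everything else is routine substitution, so once this matching is verified both inequalities follow mechanically from Theorem \ref{11}.
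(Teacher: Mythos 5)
Your proposal is correct and follows essentially the same route as the paper: both apply Theorem \ref{11} with $f(t)=e^{t}$ (at the points $\log a,\log b$, which is exactly your ``replace $e^{a}$ by $a$, $e^{b}$ by $b$'' substitution) and with $f(t)=-\log t$, using the identities $\mathfrak{C}_{e^{x},\nu}(\log a,\log b)=L_{\nu}(a,b)$ and $\mathfrak{C}_{-\log x,\nu}(a,b)=\log\frac{1}{I_{\nu}(a,b)}$, together with the cancellation of the factor $2$ against the $\tfrac12$ coming from $f(a)\nabla f(b)-f(a\nabla b)$. The only difference is that you verify the identric-mean identity by explicit integration, which the paper merely asserts.
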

\begin{proof}
The results follow from in Theorem \ref{11}, by take $f(t)=e^t$ and $f(t)=-\log t$, respectively.
We note that ${\mathfrak C }_{e^x,\nu}(\log a,\log b)=L_{\nu}(a,b)$ and ${\mathfrak C }_{-\log x,\nu}(a,b)=\log\dfrac{1}{I_{\nu}(a,b)}$. The latter is due to
$$
\int\limits_{0}^{1}{f\left( b{{\nabla }_{\left( 1-\nu \right)(1-\lambda )}}a \right)d\lambda } =
 \int\limits_{0}^{1}{f\left( b{{\nabla }_{\left( 1-\nu \right) \mu}}a \right)d\mu }. 
$$
\end{proof}

Note that $\tilde{r}(\nu) \ge 0$ and $K(a,b)\ge 1$.
The first inequality and the second inequality in  \eqref{corollary_identric_eq01} respectively give a refinement and reverse of the inequality $L_{\nu}(a,b)\le a\nabla_{\nu} b$.
Also the first inequality and the second inequality in  \eqref{corollary_identric_eq02} respectively a refinement  and a reverse of the ineuality $a{{\sharp}_{\nu }}b\le {{I}_{\nu }}\left( a,b \right)$.

We reach the following result by combining Theorems \ref{3} and \ref{11}.
\begin{corollary}\label{0}
Let $f : [a, b] \to \mathbb R$ be a convex function. Then for any $0 \le \nu  \le 1$,
{\small
\[\begin{aligned}
  & 2\left( r\int\limits_{0}^{1}{\left( \left( f\left( a{{\nabla }_{\nu \lambda }}b \right)\nabla f\left( b{{\nabla }_{\left( 1-\nu  \right)\lambda }}a \right) \right)-f\left( a{{\nabla }_{\frac{1+\lambda \left( 2\nu -1 \right)}{2}}}b \right) \right)d\lambda }+\widetilde{r}(\nu)\left( f\left( a \right)\nabla f\left( b \right)-f\left( a\nabla b \right) \right) \right) \\ 
 & \le f\left( a \right){{\nabla }_{\nu }}f\left( b \right)-f\left( a{{\nabla }_{\nu }}b \right) \\ 
 & \le 2\left( R\int\limits_{0}^{1}{\left( \left( f\left( a{{\nabla }_{\nu \lambda }}b \right)\nabla f\left( b{{\nabla }_{\left( 1-\nu  \right)\lambda }}a \right) \right)-f\left( a{{\nabla }_{\frac{1+\lambda \left( 2\nu -1 \right)}{2}}}b \right) \right)d\lambda }+\widetilde{R}(\nu)\left( f\left( a \right)\nabla f\left( b \right)-f\left( a\nabla b \right) \right) \right)  
\end{aligned}\]
}
where $r=\min \left\{ \nu ,1-\nu  \right\}$, $R=\max \left\{ \nu ,1-\nu  \right\}$, and $\tilde{r}(\nu)$, $\tilde{R}(\nu)$ are defined as in Theorem \ref{11}.
\end{corollary}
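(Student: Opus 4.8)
The plan is to realize the left-hand quantity $f(a)\nabla_\nu f(b)-f\left(a\nabla_\nu b\right)$ as a sum of the two differences already controlled by Theorems \ref{3} and \ref{11}, and then simply add the corresponding one-sided bounds. The decisive (and essentially only) observation is the telescoping identity
\[
f(a)\nabla_\nu f(b)-f\left(a\nabla_\nu b\right)
=\Bigl(f(a)\nabla_\nu f(b)-\mathfrak C_{f,\nu}(a,b)\Bigr)
+\Bigl(\mathfrak C_{f,\nu}(a,b)-f\left(a\nabla_\nu b\right)\Bigr),
\]
in which the intermediate term $\mathfrak C_{f,\nu}(a,b)$ cancels. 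The first parenthesis is exactly the quantity estimated in Theorem \ref{11}, while the second parenthesis is exactly the quantity estimated in Theorem \ref{3}.

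First I would apply Theorem \ref{11} to bound the first parenthesis below by $2\widetilde r(\nu)\left(f(a)\nabla f(b)-f(a\nabla b)\right)$ and above by $2\widetilde R(\nu)\left(f(a)\nabla f(b)-f(a\nabla b)\right)$. Next I would apply Theorem \ref{3} to bound the second parenthesis below by $2r\int_0^1(\cdots)\,d\lambda$ and above by $2R\int_0^1(\cdots)\,d\lambda$, where $(\cdots)$ abbreviates the common integrand $\left(f(a\nabla_{\nu\lambda}b)\nabla f(b\nabla_{(1-\nu)\lambda}a)\right)-f\left(a\nabla_{\frac{1+\lambda(2\nu-1)}{2}}b\right)$ appearing in both statements.

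Adding the two lower bounds and, separately, the two upper bounds, then factoring out the common factor $2$, reproduces verbatim the two inequalities of the statement, valid on the full range $0\le\nu\le1$ inherited from both theorems. There is no genuine obstacle here: the entire argument reduces to the additivity of the inequality relation applied to the telescoped decomposition, so nothing beyond bookkeeping is required. The only point worth a remark is that the integrand $(\cdots)$ is nonnegative by Remark \ref{remark_2.12_for_alpha} and that $\widetilde r(\nu)\ge 0$, so both lower bounds are nonnegative, consistent with the convexity inequality $f(a\nabla_\nu b)\le f(a)\nabla_\nu f(b)$.
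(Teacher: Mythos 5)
Your proof is correct and matches the paper's approach exactly: the paper states Corollary \ref{0} with only the remark that it follows ``by combining Theorems \ref{3} and \ref{11},'' and your telescoping decomposition through $\mathfrak C_{f,\nu}(a,b)$ with addition of the respective one-sided bounds is precisely that combination, spelled out.
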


\begin{remark}
Letting $f\left( t \right)=-\log t$ in Corollary \ref{0}. Simple calculations reveal that
\[\int\limits_{0}^{1}{-\log \left( a{{\nabla }_{\nu \lambda }}b \right)d\lambda }=1+\log {{\left( \frac{{{a}^{a}}}{{{\left( a{{\nabla }_{\nu }}b \right)}^{a{{\nabla }_{\nu }}b}}} \right)}^{\frac{1}{\nu \left( b-a \right)}}},\]
\[\int\limits_{0}^{1}{-\log \left( b{{\nabla }_{\left( 1-\nu  \right)\lambda }}a \right)d\lambda }=1+\log {{\left( \frac{{{\left( a{{\nabla }_{\nu }}b \right)}^{a{{\nabla }_{\nu }}b}}}{{{b}^{b}}} \right)}^{\frac{1}{\left( 1-\nu  \right)\left( b-a \right)}}},\]
and
\[\int_0^1 { - \log \left( {a{\nabla _{\frac{{1 + \left( {2\nu  - 1} \right)\lambda }}{2}}}b} \right)d\lambda }  = 1 + \log {\left( {\frac{{{{\left( {a\nabla b} \right)}^{a\nabla b}}}}{{{{\left( {a{\nabla _\nu }b} \right)}^{a{\nabla _\nu }b}}}}} \right)^{\frac{2}{{\left( {2v - 1} \right)\left( {b - a} \right)}}}}.\]
Thus, we have
\[\begin{aligned}
&\left( {\int\limits_0^1 { - \log \left( {a{\nabla _{\nu \lambda }}b} \right)d\lambda } } \right)\nabla \left( {\int\limits_0^1 { - \log \left( {b{\nabla _{\left( {1 - \nu } \right)\lambda }}a} \right)d\lambda } } \right) - \int\limits_0^1 { - \log \left( {a{\nabla _{\frac{{1 + \left( {2\nu  - 1} \right)\lambda }}{2}}}b} \right)d\lambda } \\
 &= \log {\left( {\frac{{{a^{\frac{a}{\nu }}}}}{{{b^{\frac{b}{{1 - \nu }}}}}}} \right)^{\frac{1}{{2\left( {b - a} \right)}}}}{\left( {\frac{{{{\left( {a{\nabla _\nu }b} \right)}^{\frac{{\left( {a{\nabla _\nu }b} \right)}}{{\nu \left( {1 - \nu } \right)\left( {2v - 1} \right)}}}}}}{{{{\left( {a\nabla b} \right)}^{\frac{{4\left( {a\nabla b} \right)}}{{\left( {2v - 1} \right)}}}}}}} \right)^{\frac{1}{{2\left( {b - a} \right)}}}}\ge 0.
\end{aligned}\]
The last inequality is due to Remark \ref{remark_2.12_for_alpha}.
Therefore, we obtain in terms of Kantorovich constant
\begin{equation}\label{remark2.22_eq01}
\alpha_{\nu}(a,b)^{r}{{ K(a,b) }^{\widetilde{r}(\nu)}}\le \frac{a{{\nabla }_{\nu }}b}{a{{\sharp}_{\nu }}b}\le \alpha_{\nu}(a,b)^{R}{{ K(a,b) }^{\widetilde{R}(\nu)}}
\end{equation}
where $\tilde{r}(\nu)$, $\tilde{R}(\nu)$ are defined as in Theorem \ref{11}, $r=\min \left\{ \nu ,1-\nu  \right\}$, $R=\max \left\{ \nu ,1-\nu  \right\}$, and
\begin{equation}\label{remark2.22_eq02}
\alpha_{\nu}(a,b) : = {\left( {\frac{{{a^{\frac{a}{\nu }}}}}{{{b^{\frac{b}{{1 - \nu }}}}}}\frac{{{{\left( {a{\nabla _\nu }b} \right)}^{\frac{{\left( {a{\nabla _\nu }b} \right)}}{{\nu \left( {1 - \nu } \right)\left( {2v - 1} \right)}}}}}}{{{{\left( {a\nabla b} \right)}^{\frac{{4\left( {a\nabla b} \right)}}{{\left( {2v - 1} \right)}}}}}}} \right)^{\frac{1}{{ b - a}}}}\ge 1,
\end{equation}
where $\nu\in(0,1/2)\cup(1/2,1)$. We easily find that $\lim\limits_{b\to a}\alpha_{\nu}(a,b)=1$ and
\begin{equation}\label{remark2.22_eq03}
\lim\limits_{\nu\to \frac{1}{2}}\alpha_{\nu}(a,b) = \left(e\left(a\nabla b\right)\left(\dfrac{a^a}{b^b}\right)^{\frac{1}{b-a}}\right)^2=\left(\frac{a\nabla b}{I_{1/2}(a,b)}\right)^2\ge 1.
\end{equation}
 Thus, the first and the second inequalities of \eqref{remark2.22_eq01}, respectively, give a refinement and a reverse of the weighted arithmetic-geometric mean inequality.
 
 In addition, \eqref{remark2.22_eq02} together with \eqref{remark2.22_eq03} give an upper bound for the weighted identric mean:
\begin{equation}\label{remark2.22_eq04}
 I_{\nu}(a,b)\le \frac{1}{e}\left(\frac{a^a}{b^b}\frac{\left(a\nabla b\right)^{\frac{4\left(a\nabla b\right)}{1-2\nu}}}{\left(a\nabla_{\nu}b\right)^{\frac{4\left(a\nabla_{\nu}b\right)}{1-2\nu}}}\right)^{\frac{1}{b-a}},\quad \left(0<\nu <1,\quad\nu\neq\frac12\right)
\end{equation}
 since 
 $$
 1\le \alpha_{\nu}(a,b) =\frac{1}{I_{\nu}(a,b)}\frac{1}{e}\left(\frac{a^a}{b^b}\frac{\left(a\nabla b\right)^{\frac{4\left(a\nabla b\right)}{1-2\nu}}}{\left(a\nabla_{\nu}b\right)^{\frac{4\left(a\nabla_{\nu}b\right)}{1-2\nu}}}\right)^{\frac{1}{b-a}}.
 $$
 Taking the limit of $\nu\to 1/2$ in \eqref{remark2.22_eq04}, we have the bound of $I_{1/2}(a,b)$ in the following way.
 $$
 I_{1/2}(a,b)\le e(a\nabla b)^2\left(\frac{a^a}{b^b}\right)^{\frac{1}{b-a}}=\frac{(a\nabla b)^2}{I_{1/2}(a,b)}
 $$ 
 which implies $I_{1/2}(a,b)\le a\nabla b$. This fits the inequality in \eqref{remark2.22_eq03}.
\end{remark}

\subsection*{Declarations}
\begin{itemize}
\item {\bf{Availability of data and materials}}: Not applicable.
\item {\bf{Competing interests}}: The authors declare that they have no competing interests.
\item {\bf{Funding}}: This research is supported by a grant (JSPS KAKENHI, Grant Number: 21K03341) awarded to the author, S. Furuichi.
\item {\bf{Authors' contributions}}: Authors declare that they have contributed equally to this paper. All authors have read and approved this version.
\end{itemize}

\end{document}